\numberwithin{equation}{section}
\theoremstyle{plain}
\newtheorem{theorem}{Theorem}[section]
\newtheorem{proposition}[theorem]{Proposition}
\newtheorem{lemma}[theorem]{Lemma}
\newtheorem{corollary}[theorem]{Corollary}
\theoremstyle{definition}
\newtheorem{definition}[theorem]{Definition}
\newtheorem{remark}[theorem]{Remark}
\newtheorem*{acknowledgement}{Acknowledgements}
\begin{document}

\title[The Andoni--Naor--Neiman inequalities]{The Andoni--Naor--Neiman inequalities and \\isometric embeddability into a $\mathrm{CAT}(0)$ space}

\author{Tetsu Toyoda}
\address{Kogakuin University, 2665-1, Nakano, Hachioji, Tokyo, 192-0015 Japan}
\email{toyoda@cc.kogakuin.ac.jp}
\subjclass[2020]{30L15, 53C23, 51F99}
\keywords{$\mathrm{CAT}(0)$ spaces, isometric embeddability, quadratic metric inequalities}
\maketitle


\begin{abstract}
Andoni, Naor and Neiman (2018) established a family of 
quadratic metric inequalities that hold true in every $\mathrm{CAT}(0)$ space. 
As stated in their paper, this family seems to include all previously used quadratic metric inequalities that hold true in every $\mathrm{CAT}(0)$ space. 
We prove that there exists a metric space that satisfies all inequalities in this family 
but does not admit an isometric embedding into any $\mathrm{CAT}(0)$ space. 
More precisely, we prove that the $6$-point metric space constructed by Nina Lebedeva, 
which does not admit an isometric embedding into any $\mathrm{CAT}(0)$ space, 
satisfies all inequalities in this family. 
\end{abstract}

\section{Introduction}

We call a condition on a metric space $(X,d_{X})$ a 
{\em quadratic metric inequality on $n$ points} 
if there exists a real $n\times n$ matrix $(a_{ij})$ such that 
$(X,d_{X})$ satisfies the condition if and only if the inequality 
\begin{equation}\label{qm-ineq}
0\leq
\sum_{i=1}^n\sum_{j=1}^n a_{ij}d_X (x_i ,x_j)^2
\end{equation}
holds for any $x_1 ,\ldots ,x_n \in X$. 
We call a quadratic metric inequality a {\em $\mathrm{CAT}(0)$ quadratic metric inequality} if 
every $\mathrm{CAT}(0)$ space satisfies it. 
Andoni--Naor--Neiman \cite{ANN} proved the following theorem. 

\begin{theorem}[Andoni, Naor and Neiman \cite{ANN}]\label{ANN-th}
An $n$-point metric space $X$ admits an isometric embedding into a $\mathrm{CAT}(0)$ space if and only if $X$ 
satisfies all $\mathrm{CAT}(0)$ quadratic metric inequalities on $n$ points. 
\end{theorem}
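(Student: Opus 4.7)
The forward direction is immediate: any isometric copy of $X$ in a $\mathrm{CAT}(0)$ space satisfies every $\mathrm{CAT}(0)$ quadratic metric inequality, hence so does $X$. For the converse, my strategy is a finite-dimensional convex separation argument. Label $X=\{x_1,\dots,x_n\}$ and consider
\[
\mathcal{C}=\bigl\{(D_{ij})\in\R^{n\times n}:\exists\,\mathrm{CAT}(0)\text{ space }Y\text{ and }y_1,\dots,y_n\in Y\text{ with }D_{ij}=d_Y(y_i,y_j)^2\bigr\}.
\]
Producing an isometric embedding of $X$ is equivalent to showing that the squared-distance matrix $(d_X(x_i,x_j)^2)$ lies in $\mathcal{C}$.

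The first step is to prove that $\mathcal{C}$ is a convex cone. Closure under positive scalars follows by rescaling the metric on $Y$, which preserves the $\mathrm{CAT}(0)$ property. Closure under addition follows from the fact that the $\ell^2$-product of two $\mathrm{CAT}(0)$ spaces is again $\mathrm{CAT}(0)$, together with the identity $d_{Y_1\times Y_2}((y,y'),(z,z'))^2=d_{Y_1}(y,z)^2+d_{Y_2}(y',z')^2$; thus if $(D_{ij})$ and $(D'_{ij})$ both lie in $\mathcal{C}$, so does their sum.

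The second and more delicate step is to show that $\mathcal{C}$ is closed in $\R^{n\times n}$. The natural tool is an ultralimit: given a convergent sequence of matrices $(D^{(k)}_{ij})\to(D^\infty_{ij})$ with each realized by points $(y_1^{(k)},\dots,y_n^{(k)})$ in a $\mathrm{CAT}(0)$ space $Y_k$, one forms a non-principal ultralimit of the pointed spaces $(Y_k,y_1^{(k)})$. Since ultralimits preserve the $\mathrm{CAT}(0)$ property and the mutual distances of the $n$-tuples remain uniformly bounded along the sequence, the limit contains an $n$-tuple realizing $(D^\infty_{ij})$.

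With both properties in place, the theorem follows from Hahn--Banach. If the squared-distance matrix of $X$ did not lie in $\mathcal{C}$, the closed convex cone $\mathcal{C}$ could be strictly separated from it by a linear functional $(a_{ij})$ satisfying $\sum_{i,j}a_{ij}d_X(x_i,x_j)^2<0$ while $\sum_{i,j}a_{ij}M_{ij}\ge 0$ for every $M\in\mathcal{C}$. But the latter condition is precisely the statement that $0\le\sum_{i,j}a_{ij}d_Y(y_i,y_j)^2$ is a $\mathrm{CAT}(0)$ quadratic metric inequality on $n$ points, which by hypothesis $X$ must satisfy---a contradiction. The main obstacle throughout is the closedness step: one needs an abstract limit procedure that keeps the relevant $n$ points visible while preserving $\mathrm{CAT}(0)$ curvature; once this is in hand, the convex-cone structure and the separation argument are routine.
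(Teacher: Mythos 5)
Your argument is correct, and it is essentially the standard proof of this result: the paper itself does not prove Theorem~\ref{ANN-th} but cites \cite[Proposition~3]{ANN}, where the same strategy is used --- realize the squared-distance matrices of $n$-tuples in $\mathrm{CAT}(0)$ spaces as a convex cone (closed under sums via $\ell^2$-products and under positive scaling), establish closedness via ultralimits, and conclude by separating hyperplane duality. All the ingredients you invoke (products and ultralimits of $\mathrm{CAT}(0)$ spaces remaining $\mathrm{CAT}(0)$, boundedness of the $n$-tuples along the sequence, and the cone form of the separation theorem) are used correctly, so no gap remains.
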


For the original statement of Theorem~\ref{ANN-th} in full generality, see \cite[Proposition~3]{ANN}. 
To find a characterization of those metric spaces that admit 
an isometric embedding into a $\mathrm{CAT}(0)$ space is a longstanding open problem stated by 
Gromov in \cite[\S15]{Gr2} and \cite[Section 1.19+]{Gr1} 
(see also \cite[Section 7]{AKP}, \cite[Chapter 10, E]{AKP-book}, \cite[Section 1.4]{ANN}, \cite[Section 2.2]{Bac} 
and \cite[Section 1]{toyoda-five}). 
Theorem~\ref{ANN-th} tells us that we can answer this problem if 
we can characterize $\mathrm{CAT}(0)$ quadratic metric inequalities. 
To see some recent developments in this direction, 
we recall the following family of $\mathrm{CAT}(0)$ quadratic metric inequalities on $4$ points. 

\begin{definition}
We say that a metric space $(X,d_X )$ satisfies 
the {\em $\boxtimes$-inequalities} if we have 
\begin{multline*}
0
\leq
(1-s)(1-t) d_X (x,y)^2 +s(1-t) d_X (y,z)^2 +st d_X (z,w)^2 +(1-s)t d_X (w,x)^2 \\
-s(1-s) d_X (x,z)^2 -t(1-t) d_X (y,w)^2
\end{multline*}
for any $s, t \in\lbrack0,1\rbrack$ and any $x,y,z,w\in X$.  
\end{definition}
Gromov \cite{Gr2} and Sturm \cite{St} proved independently that every $\mathrm{CAT}(0)$ space satisfies the 
$\boxtimes$-inequalities. 
The name ``$\boxtimes$-inequalities'' is based on a notation used by Gromov \cite{Gr2}. 
Sturm \cite{St} called these inequalities the {\em weighted quadruple inequalities}. 
The present author \cite{toyoda-five} proved the following theorem. 

\begin{theorem}[\cite{toyoda-five}]\label{five-th}
If a metric space $X$ satisfies the $\boxtimes$-inequalities, 
then $X$ satisfies all $\mathrm{CAT}(0)$ quadratic metric inequalities on $5$ points. 
\end{theorem}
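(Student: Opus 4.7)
The plan is to invoke Theorem~\ref{ANN-th} to reduce the statement to a purely geometric problem: it suffices to prove that every $5$-point metric space $Z=\{z_1,\ldots ,z_5\}$ satisfying all $\boxtimes$-inequalities admits an isometric embedding into some $\mathrm{CAT}(0)$ space. Once such an embedding is produced, every $\mathrm{CAT}(0)$ quadratic metric inequality on five points is inherited automatically.

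For the construction, I would start from the $4$-point case: a $4$-point metric space satisfying the $\boxtimes$-inequalities is known to admit an isometric embedding into a simple $\mathrm{CAT}(0)$ model, namely a tree (a segment or a tripod) or a convex polygon in the Euclidean plane $\R^2$. Applying this to the subconfiguration $\{z_1,z_2,z_3,z_4\}$ produces an embedded model $Y_0$, and the task reduces to attaching $z_5$. I would do this by enlarging $Y_0$ to a $\mathrm{CAT}(0)$ space $Y$ via Reshetnyak gluing---attaching a Euclidean piece (segment, sector, or half-plane) to $Y_0$ along a complete convex subset---and placing $z_5$ inside the new piece so that enough of the distances $d_Z(z_i,z_5)$ are matched by construction. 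The remaining distances $d_Y(z_i,z_5)$ would then be shown to coincide with $d_Z(z_i,z_5)$ by applying $\boxtimes$-inequalities to the $4$-point subsets $\{z_i,z_j,z_k,z_5\}$ with parameters $s,t\in\lbrack 0,1\rbrack$ chosen so that the inequality tightens to an equality pinning down the target squared distance.

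The main obstacle will be the combinatorial case analysis. The model $Y_0$ can realise several qualitatively distinct shapes (Euclidean quadrilateral, tripod of various branching types, degenerate or collinear configurations), and for each shape there are further sub-cases describing how $z_5$ sits relative to the geodesic structure of $Y_0$ (on which branch, in which sector, inside or outside a convex hull, etc.). In each sub-case one must identify the correct auxiliary $4$-tuples and the correct parameter values $s,t$ so that the $\boxtimes$-inequalities yield exact identities rather than mere bounds, and then verify that the pieces glued on at different stages are mutually consistent. Packaging this into a uniform argument---so that every configuration is covered and the exactness needed for an isometric embedding is preserved at every step, including in degenerate limiting cases---is where the bulk of the technical work will lie.
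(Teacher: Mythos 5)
The paper does not prove this theorem at all: it is quoted verbatim from \cite{toyoda-five}, so there is no internal proof to compare against. Your outline is, in fact, a faithful description of the strategy of that reference: reduce via Theorem~\ref{ANN-th} to showing that every $5$-point metric space satisfying the $\boxtimes$-inequalities embeds isometrically into some $\mathrm{CAT}(0)$ space, realise the $4$-point subconfigurations by their known $\mathrm{CAT}(0)$ models (tripods/segments or convex planar quadrilaterals), and attach the fifth point by gluing, using the $\boxtimes$-inequalities on $4$-point subsets to force the remaining distances. The reduction step is sound (any $5$ points of $X$, with repetitions allowed, span a subspace that still satisfies the $\boxtimes$-inequalities, and an isometric copy inside a $\mathrm{CAT}(0)$ space inherits every $\mathrm{CAT}(0)$ quadratic metric inequality), and the identification of the case analysis as the real difficulty is accurate. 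But that case analysis \emph{is} the theorem: the cited paper spends some fifty pages enumerating the shapes of the $4$-point models, the positions of the fifth point, and the exact parameter choices $(s,t)$ that turn the $\boxtimes$-inequalities into the identities needed for isometry, including the degenerate configurations. As written, your proposal is a correct roadmap to the published proof rather than a proof; nothing in it could be checked for correctness beyond the (valid) reduction in the first paragraph.
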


It follows from Theorem~\ref{ANN-th} and Theorem~\ref{five-th} that a metric space $X$ with $|X|\leq 5$ admits 
an isometric embedding into a $\mathrm{CAT}(0)$ space if and only if $X$ satisfies the 
$\boxtimes$-inequalities. 
On the other hand, Nina Lebedeva constructed a $6$-point metric space that satisfies 
the $\boxtimes$-inequalities but does not admit an isometric embedding into any $\mathrm{CAT}(0)$ space 
(see the arXiv version of \cite[\S 7.2]{AKP} or Theorem \ref{Lebedeva-th} in this paper). 
Therefore, the validity of the $\boxtimes$-inequalities is not a sufficient condition for even a $6$-point metric space 
to admit an isometric embedding into a $\mathrm{CAT}(0)$ space.

Toward a characterization of $\mathrm{CAT}(0)$ quadratic metric inequalities, 
Andoni--Naor--Neiman \cite{ANN} proposed a general procedure to obtain $\mathrm{CAT}(0)$ quadratic metric inequalities, 
and presented the following large family of $\mathrm{CAT}(0)$ quadratic metric inequalities (see Lemma~27 and Section 5.1 in \cite{ANN}). 
Throughout this paper, 
we denote by $\lbrack n\rbrack$ the set $\{ 1,2,\ldots ,n\}$ for each positive integer $n$. 

\begin{theorem}[Andoni, Naor and Neiman \cite{ANN}]\label{ANN-ineq-th}
Fix a positive integer $n$. 
Let $m$ be a positive integer, and let $c_1 ,\ldots ,c_m$ be positive real numbers. 
Suppose for every $k\in\lbrack m\rbrack$, $p_1^k ,\ldots ,p_n^k ,q_1^k ,\ldots ,q_n^k$ are positive real numbers that satisfy 
\begin{equation*}
\sum_{i=1}^n p_i^k =\sum_{j=1}^n q_j^k =1.
\end{equation*}
Suppose for every $k\in\lbrack m\rbrack$, 
$A_k=(a_{ij}^k)$ and $B_k=(b_{ij}^k)$ are $n\times n$ matrices with nonnegative real entries 
that satisfy 
\begin{equation*}
\sum_{s=1}^n a_{is}^{k}+\sum_{s=1}^n b_{sj}^{k}=p_i^k +q_j^k
\end{equation*}
for any $i,j\in\lbrack n\rbrack$. 
Then we have 
\begin{equation}\label{ANN-ineq}
\sum_{k=1}^{m}\hspace{2mm}\sum_{i,j\in\lbrack n\rbrack : a_{ij}^k +b_{ij}^k >0}\frac{c_k a_{ij}^k b_{ij}^k}{a_{ij}^k +b_{ij}^k}d_X (x_i ,x_j )^2
\leq
\sum_{k=1}^{m}\sum_{i=1}^n \sum_{j=1}^n c_k p_i^k q_j^k d_X (x_i ,x_j )^2
\end{equation}
for any $\mathrm{CAT}(0)$ space $(X,d_X)$ and any points $x_1 ,\ldots ,x_n \in X$. 
\end{theorem}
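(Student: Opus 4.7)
The plan is to derive \eqref{ANN-ineq} by combining a pairwise weighted $\mathrm{CAT}(0)$ inequality with Sturm's barycenter variance bound. Because each $c_k>0$ and the hypotheses decouple across $k$, the outer sum over $k$ is immediate and it suffices to treat a single fixed index, which we drop from the notation.

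The first step is to introduce, for each pair $(i,j)$ with $a_{ij}+b_{ij}>0$, the weighted midpoint $m_{ij}$ on the geodesic $\lbrack x_i,x_j\rbrack$ at parameter $b_{ij}/(a_{ij}+b_{ij})$ from $x_i$; for such a pair the $\mathrm{CAT}(0)$ inequality gives
\begin{equation*}
a_{ij}\,d_X(y,x_i)^2+b_{ij}\,d_X(y,x_j)^2\ge(a_{ij}+b_{ij})\,d_X(y,m_{ij})^2+\frac{a_{ij}b_{ij}}{a_{ij}+b_{ij}}\,d_X(x_i,x_j)^2
\end{equation*}
for every $y\in X$. Summing over all $(i,j)\in\lbrack n\rbrack^2$, the hypothesis $\sum_s a_{is}+\sum_s b_{sj}=p_i+q_j$ forces $\sum_s a_{is}=p_i-\lambda$ and $\sum_s b_{sj}=q_j+\lambda$ for a common constant $\lambda$, and the $\lambda$ contribution cancels from the left-hand side since $i$ and $j$ range over the same index set $\lbrack n\rbrack$, so $\sum_i d_X(y,x_i)^2=\sum_j d_X(y,x_j)^2$. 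Dropping the nonnegative midpoint sum on the right yields the pointwise bound
\begin{equation*}
\sum_{i=1}^n p_i\,d_X(y,x_i)^2+\sum_{j=1}^n q_j\,d_X(y,x_j)^2\ge\sum_{\substack{i,j\in\lbrack n\rbrack\\ a_{ij}+b_{ij}>0}}\frac{a_{ij}b_{ij}}{a_{ij}+b_{ij}}\,d_X(x_i,x_j)^2
\end{equation*}
valid for every $y\in X$.

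The main obstacle is then to exhibit $y\in X$ for which the left-hand side above is at most $\sum p_iq_j\,d_X(x_i,x_j)^2$. I plan to take $y=M$, the midpoint of the geodesic joining the Sturm barycenters $P$ and $Q$ of the measures $\sum p_i\delta_{x_i}$ and $\sum q_j\delta_{x_j}$. Applying the $\mathrm{CAT}(0)$ midpoint inequality $d_X(M,z)^2\le\tfrac12 d_X(P,z)^2+\tfrac12 d_X(Q,z)^2-\tfrac14 d_X(P,Q)^2$ first with $z=x_i$ and averaging against $p_i$, then with $z=x_j$ and averaging against $q_j$, the ``diagonal'' sums $\sum p_i\,d_X(P,x_i)^2$ and $\sum q_j\,d_X(Q,x_j)^2$ equal the variances $V_p$ and $V_q$ exactly, while the ``cross'' sums $\sum p_i\,d_X(Q,x_i)^2$ and $\sum q_j\,d_X(P,x_j)^2$ can be bounded above by $\sum p_iq_j\,d_X(x_i,x_j)^2-V_q$ and $\sum p_iq_j\,d_X(x_i,x_j)^2-V_p$ by invoking Sturm's variance inequality $d_X(Q,x_i)^2\le\sum_j q_j\,d_X(x_j,x_i)^2-V_q$ at the base points $x_i$ (and its $p\leftrightarrow q$ analogue at $x_j$). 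Adding the two bounds, the $V_p,V_q$ corrections cancel, producing
\begin{equation*}
\sum_{i=1}^n p_i\,d_X(M,x_i)^2+\sum_{j=1}^n q_j\,d_X(M,x_j)^2\le\sum_{i,j} p_iq_j\,d_X(x_i,x_j)^2-\tfrac12 d_X(P,Q)^2,
\end{equation*}
which, combined with the previous step at $y=M$, yields \eqref{ANN-ineq}. The delicate point is the precise matching between the $\tfrac14 d_X(P,Q)^2$ corrections from the midpoint inequality and the $V_p,V_q$ corrections from Sturm's inequality: the cancellation fails for other natural choices of $y$ such as $P$, $Q$, or the Sturm barycenter of the mixture $\tfrac12(p+q)$.
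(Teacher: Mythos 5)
Your argument is correct, and it uses the same two ingredients as the paper: the pairwise $\mathrm{CAT}(0)$ convexity inequality (which produces the harmonic-mean coefficient $a_{ij}b_{ij}/(a_{ij}+b_{ij})$) and Sturm's variance inequality \eqref{variance-ineq} at a barycenter. The paper organizes this slightly differently: it first symmetrizes, setting $\pi_{ij}=a_{ij}+b_{ji}$ so that $\sum_j\pi_{ij}=p_i+q_i$, passes from the $(A,B)$-form to the $\pi$-form via concavity of the harmonic mean (Proposition~\ref{elementary-prop} and Lemma~\ref{reformulation-lemma}), and then proves the $\pi$-form (Proposition~\ref{pi-prop}) by evaluating everything at the single barycenter $z=\mathrm{bar}(\sum_i p_i\delta_{x_i})$. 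Your version avoids the symmetrization by using the observation that the row/column sums must be $p_i-\lambda$ and $q_j+\lambda$ with the $\lambda$'s cancelling, which is a clean alternative. One correction, though: your closing claim that the cancellation fails for $y=P$ is wrong, and $y=P$ is in fact exactly the paper's choice. Indeed $\sum_i p_i\,d_X(P,x_i)^2=V_p$ by definition, and Sturm's inequality at $w=x_j$ gives $\sum_j q_j\,d_X(P,x_j)^2\le\sum_{i,j}p_iq_j\,d_X(x_i,x_j)^2-V_p$, so the two $V_p$'s cancel and the left-hand side of your pointwise bound at $y=P$ is at most $\sum_{i,j}p_iq_j\,d_X(x_i,x_j)^2$ with no midpoint construction needed. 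Your detour through the midpoint $M$ of the two barycenters is valid (and even yields the extra term $-\tfrac12 d_X(P,Q)^2$), but it is unnecessary machinery; the single-barycenter choice is both simpler and what Proposition~\ref{pi-prop} uses.
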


It is easily seen that many known $\mathrm{CAT}(0)$ quadratic metric inequalities, 
including the $\boxtimes$-inequalities, are of the form \eqref{ANN-ineq}. 
Moreover, as stated in \cite[Section 5.1]{ANN}, 
it seems that all of the previously used $\mathrm{CAT}(0)$ quadratic metric inequalities 
are of the form \eqref{ANN-ineq}. 
So it is natural to ask whether the inequalities of the form \eqref{ANN-ineq} capture the totality of $\mathrm{CAT}(0)$ quadratic metric inequalities 
(cf. \cite[Question 31]{ANN}). 
In this paper, we answer this question negatively by proving the following theorem, which is our main result. 

\begin{theorem}\label{main-th}
There exists a $6$-point metric space $(X,d_X )$ that satisfies the following conditions: 
\begin{enumerate}
\item[$\mathrm{(i)}$]
For every positive integer $n$, any points $x_1 ,\ldots ,x_n \in X$ satisfy the inequality 
\begin{equation}\label{main-th-pi-ineq}
\frac{1}{2}\sum_{i,j\in\lbrack n\rbrack : \pi_{ij}+\pi_{ji}>0}
\frac{\pi_{ij}\pi_{ji}}{\pi_{ij}+\pi_{ji}}d_X (x_i ,x_j)^2
\leq
\sum_{i=1}^{n}\sum_{j=1}^{n}p_i q_j d_X(x_i ,x_j)^2
\end{equation}
for any nonnegative real numbers $p_1 ,\ldots ,p_n ,q_1 ,\ldots ,q_n$ and any 
$n\times n$ matrix $\left(\pi_{ij}\right)$ with nonnegative real entries that satisfy 
\begin{equation*}
\sum_{k=1}^n p_k =\sum_{k=1}^n q_k =1,\quad
\sum_{k=1}^n \pi_{ik}=p_i+q_i
\end{equation*}
for every $i\in\lbrack n\rbrack$. 
\item[$\mathrm{(ii)}$]
$X$ does not admit an isometric embedding into any $\mathrm{CAT}(0)$ space. 
\end{enumerate}
\end{theorem}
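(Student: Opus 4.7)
The natural candidate is to take $(X,d_X)$ to be Nina Lebedeva's $6$-point space, for which condition (ii) is built in by construction (see \cite[\S 7.2]{AKP}). The entire content of Theorem~\ref{main-th} therefore lies in establishing (i), that is, the verification of the inequality \eqref{main-th-pi-ineq} on $X$ for every positive integer $n$ and every admissible $(p,q,\pi)$.

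My first step would be to rewrite the left-hand side of \eqref{main-th-pi-ineq} using the variational identity
\[
\frac{ab}{a+b}\;=\;\min_{t\in[0,1]}\bigl(a(1-t)^2+bt^2\bigr),
\]
applied with $a=\pi_{ij}$ and $b=\pi_{ji}$. At the optimum $t_{ij}^{\ast}=\pi_{ij}/(\pi_{ij}+\pi_{ji})$ one has $t_{ij}^{\ast}+t_{ji}^{\ast}=1$, and after symmetrizing in $(i,j)$ the inequality \eqref{main-th-pi-ineq} becomes equivalent to the family
\[
\sum_{i,j}p_iq_j\,d_X(x_i,x_j)^2
\;\ge\;
\sum_{i,j}\pi_{ij}(1-t_{ij}^{\ast})^2\,d_X(x_i,x_j)^2,
\]
parametrized by $(p,q,\pi)$ and linear in the squared distances $D_{ij}:=d_X(x_i,x_j)^2$. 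The second structural feature I would exploit is the joint concavity of the map $(\pi_{ij},\pi_{ji})\mapsto \pi_{ij}\pi_{ji}/(\pi_{ij}+\pi_{ji})$; this makes the left-hand side of \eqref{main-th-pi-ineq} a concave function of $\pi$ on the product-of-simplices polytope $\{\pi\ge 0:\sum_k\pi_{ik}=p_i+q_i\}$, so its maximum in $\pi$ (for fixed $p,q$) is attained at a (possibly boundary) critical point of the associated Lagrangian, whose coordinates can be described explicitly in terms of the $D_{ij}$ via the KKT conditions.

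The main obstacle, and where the bulk of the technical work will lie, is that even after these reductions one must verify a semi-algebraic inequality involving the six specific squared distances appearing in Lebedeva's space together with the free parameters $(p,q)$. My plan is therefore to introduce the explicit metric data of Lebedeva's space and use the symmetry group of her construction to cut the case analysis down to a small collection of orbit representatives. A fallback strategy, should the direct case check prove unwieldy, is to aim instead for an explicit certificate of the required nonnegativity by writing the quantity
\[
\sum_{i,j} p_iq_j D_{ij}\;-\;\tfrac{1}{2}\sum_{i,j}\tfrac{\pi_{ij}\pi_{ji}}{\pi_{ij}+\pi_{ji}}D_{ij}
\]
as a nonnegative linear combination of the $\boxtimes$-inequalities (which $X$ satisfies by Lebedeva's construction) together with terms vanishing on the constraint set in $(p,q,\pi)$. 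Producing such a certificate, finely tuned to Lebedeva's six specific distances, is, I expect, where the real technical effort of the proof would concentrate.
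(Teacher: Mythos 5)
Your choice of candidate space is the same as the paper's (Lebedeva's $6$-point space $(L,d_{\varepsilon})$, with (ii) supplied by Lebedeva's theorem), and your algebraic rewritings of the left-hand side are correct, but the plan for establishing (i) is missing the idea that actually makes the verification possible, so as it stands there is a genuine gap. The key structural fact is that $d_{\varepsilon}$ is an $\varepsilon$-perturbation of the restriction of the \emph{Euclidean} metric to six points of $\mathbb{R}^3$: only the single distance $d(x_5,x_6)$ is inflated. The paper's proof exploits this by (a) proving a strengthened form of the ANN inequality in any $\mathrm{CAT}(0)$ space (Proposition~\ref{pi-prop}) in which the deficit $F(0)=\sum_{i,j}p_iq_jD_{ij}-\frac12\sum_{i,j}\frac{\pi_{ij}\pi_{ji}}{\pi_{ij}+\pi_{ji}}D_{ij}$ is bounded below by the explicit surplus $\sum_{i,j}\pi_{ij}\|z-z_{ij}\|^2$ involving Euclidean barycenters; (b) observing that passing from $\varepsilon=0$ to $\varepsilon>0$ changes the deficit by at least $\bigl(p_5q_6+p_6q_5-\frac{\pi_{56}\pi_{65}}{\pi_{56}+\pi_{65}}\bigr)f(\varepsilon)$, a quantity that can be negative; and (c) running a geometric case analysis on the octahedron showing that whenever this coefficient is appreciably negative the barycenter surplus is proportionally large, with constants depending only on the configuration, which yields a uniform threshold $C>0$ for $\varepsilon$. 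Your proposal never isolates a quantitative surplus for the unperturbed Euclidean configuration, and without one there is nothing to absorb the $\varepsilon$-term: the inequality at $\varepsilon=0$ can be arbitrarily close to equality for suitable $(p,q,\pi)$, so a "direct case check on orbit representatives" cannot succeed unless the near-equality cases are shown to force $p_5q_6+p_6q_5-\frac{\pi_{56}\pi_{65}}{\pi_{56}+\pi_{65}}\geq 0$. Note also that Lebedeva's octahedron is required to be nonregular and is otherwise generic, so there is no useful symmetry group to quotient by.

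Two further points. First, your fallback of certifying nonnegativity as a combination of $\boxtimes$-inequalities is not promising: the paper's own framing (Theorems~\ref{ANN-th} and~\ref{five-th} and the discussion after them) is precisely that on six points the $\boxtimes$-inequalities are \emph{not} a complete certificate system, and a certificate that must depend continuously on the free parameters $(p,q,\pi)$ is essentially as hard to produce as the original inequality; moreover the relevant positivity here comes from the Euclidean embedding of $L$ (squared distances form a negative-type/Gram-matrix structure), not from the $\boxtimes$-inequalities. Second, the reduction from arbitrary $n$ (with repeated points) to $n=6$ distinct points is not automatic: it requires merging the weights $p_i,q_i,\pi_{ij}$ along the fibers of the labeling map and invoking the superadditivity $\frac{a_1b_1}{a_1+b_1}+\frac{a_2b_2}{a_2+b_2}\leq\frac{(a_1+a_2)(b_1+b_2)}{a_1+a_2+b_1+b_2}$ (Propositions~\ref{elementary-prop} and~\ref{n-points-prop} in the paper); your write-up silently assumes this step.
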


As we will show in Section \ref{reformulation-sec}, the validity of all inequalities of the form \eqref{main-th-pi-ineq} 
implies the validity of all inequalities of the form \eqref{ANN-ineq}. 
Thus Theorem~\ref{main-th} implies the following corollary.

\begin{corollary}\label{main-coro}
There exists a $6$-point metric space $(X,d_X )$ that satisfies the following conditions: 
\begin{itemize}
\item[$\mathrm{(i)}$]
For every positive integer $n$, any points $x_1 ,\ldots ,x_n \in X$ satisfy the inequality \eqref{ANN-ineq} for 
any $m$, $c_1 ,\ldots ,c_m$, $\left( p_{i}^{1}\right)_{i\in\lbrack n\rbrack},\ldots ,\left( p_{i}^{m}\right)_{i\in\lbrack n\rbrack}$, 
$\left( q_{j}^{1}\right)_{j\in\lbrack n\rbrack},\ldots ,\left( q_{j}^{m}\right)_{j\in\lbrack n\rbrack}$, 
$A_1 ,\ldots ,A_m$, $B_1 ,\ldots ,B_m$ as in the statement of Theorem~\ref{ANN-ineq-th}. 
\item[$\mathrm{(ii)}$]
$X$ does not admit an isometric embedding into any $\mathrm{CAT}(0)$ space. 
\end{itemize}
\end{corollary}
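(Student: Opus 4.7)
My plan is to let $X$ be Lebedeva's six-point metric space (see \cite[\S 7.2]{AKP}), for which property (ii) is part of its construction; the content of the theorem is then to verify (i) for this particular $X$.

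The first step cuts the apparently infinitary condition (i) down to a finite-dimensional statement. The function $(a,b)\mapsto ab/(a+b)$ on $\R_{\geq 0}^{2}$ is concave and $1$-homogeneous, hence superadditive:
\begin{equation*}
\frac{(a_1+a_2)(b_1+b_2)}{(a_1+a_2)+(b_1+b_2)}\geq \frac{a_1b_1}{a_1+b_1}+\frac{a_2b_2}{a_2+b_2}.
\end{equation*}
Consequently, if two indices $i_1,i_2\in\lbrack n\rbrack$ satisfy $x_{i_1}=x_{i_2}$, we may ``merge'' them: sum the corresponding $p$'s and $q$'s, and sum the corresponding rows and columns of $\pi$. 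One checks that the affine constraints of (i) are preserved, that the RHS of \eqref{main-th-pi-ineq} is unchanged because $d_X (x_{i_1},x_{i_2})=0$, and that the LHS can only grow (the superadditivity above applied to each remaining index $k$). Iterating, it suffices to prove (i) when $n=6$ and $\{x_1,\ldots,x_6\}=X$, with smaller $n$ subsumed by zero weights.

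With the points fixed, (i) becomes a concrete inequality in the nonnegative variables $p,q\in\R^{6}$ and $\pi\in\R^{6\times 6}$ subject to $\sum p_i=\sum q_i=1$ and $\sum_k\pi_{ik}=p_i+q_i$. For each fixed $(p,q)$ the LHS is concave in $\pi$ on the resulting polytope. Applying KKT analysis, at any interior critical point one has $\pi_{ji}/(\pi_{ij}+\pi_{ji})=\sqrt{\lambda_i}/d_X (x_i,x_j)$ for Lagrange multipliers $\lambda_i\geq 0$, which forces the radius-compatibility relation $\sqrt{\lambda_i}+\sqrt{\lambda_j}=d_X (x_i,x_j)$ across the support graph of $\pi$. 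I would then enumerate the subgraphs $G\subseteq\binom{\lbrack 6\rbrack}{2}$ admitting such multipliers for the explicit Lebedeva distances; on each such $G$, the maximizing $\pi$ is pinned down by the row-sum constraints as an affine function of $(p,q)$, and substituting back turns \eqref{main-th-pi-ineq} into a finite list of quadratic inequalities in $(p,q)$ alone.

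The main obstacle is this final combinatorial-algebraic step. The number of candidate support graphs is a priori large, and for each one must exhibit a nonnegativity certificate (ideally a sum-of-squares decomposition exploiting the specific algebraic form of the Lebedeva distances) for the induced quadratic form in $(p,q)$. The existence of such certificates is genuinely delicate, since the closely related property of isometric embeddability into $\mathrm{CAT}(0)$ is known to fail for $X$; I would rely on the symmetries of Lebedeva's configuration to cut down the enumeration, and on the explicit form of its distance table to produce the certificates case by case.
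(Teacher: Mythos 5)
Your choice of Lebedeva's space and your reduction to the case $n=6$ via the superadditivity of $(a,b)\mapsto ab/(a+b)$ are both correct and match the paper (this is Proposition~\ref{n-points-prop}, resting on Proposition~\ref{elementary-prop}). Note, however, that the corollary asks for the inequalities \eqref{ANN-ineq}, not \eqref{main-th-pi-ineq}: you silently pass to the latter. The passage is legitimate but requires the substitution $\pi_{ij}=a_{ij}+b_{ji}$ together with one more application of the same superadditivity (this is Lemma~\ref{reformulation-lemma}); the sum over $k\in\lbrack m\rbrack$ with weights $c_k$ is then just a positive combination of single-$k$ instances. That omission is small and fixable.

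The genuine gap is the core verification: you never prove that Lebedeva's six-point space actually satisfies \eqref{main-th-pi-ineq} for all admissible $(p,q,\pi)$ on its six points. Your KKT/support-graph/SOS program is a plan rather than a proof, and as described it has defects: for a fixed support graph the stationarity conditions determine only the ratios $\pi_{ji}/(\pi_{ij}+\pi_{ji})$ via $\sqrt{\lambda_i}+\sqrt{\lambda_j}=d_X(x_i,x_j)$, while the six row-sum constraints leave the edge masses $\pi_{ij}+\pi_{ji}$ underdetermined (up to $15$ off-diagonal edges against $6$ linear constraints), so the maximizer is \emph{not} pinned down as an affine function of $(p,q)$; and producing nonnegativity certificates for the resulting quadratics, for a space sitting exactly at the boundary of $\mathrm{CAT}(0)$-embeddability, is precisely the hard part you defer. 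The paper avoids all of this by exploiting that $d_\varepsilon$ is an $\varepsilon$-perturbation of a genuinely Euclidean configuration in $\mathbb{R}^3$: Proposition~\ref{pi-prop} applied in $\mathbb{R}^3$ gives, at $\varepsilon=0$, the inequality \emph{with the extra slack term} $\sum_{i,j}\pi_{ij}\|z-z_{ij}\|^2$ for a Euclidean barycenter $z$; the perturbation changes the deficiency by at least $\bigl(p_5q_6+p_6q_5-\pi_{56}\pi_{65}/(\pi_{56}+\pi_{65})\bigr)f(\varepsilon)$, which can be negative; and a geometric case analysis shows that the slack dominates this loss for all $\varepsilon$ below an explicit threshold. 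Without that quantitative argument, or some substitute for it, your proof of condition (i) is incomplete.
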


More precisely, we will prove that 
Lebedeva's $6$-point metric space 
that we have mentioned before satisfies the condition $\mathrm{(i)}$ in Theorem~\ref{main-th} and 
the condition $\mathrm{(i)}$ in Corollary~\ref{main-coro}. 

\begin{remark}
As stated in \cite[Section 5.1]{ANN}, 
no $\mathrm{CAT}(0)$ quadratic metric inequality that is not a necessary condition 
for a metric space to satisfy all $\mathrm{CAT}(0)$ quadratic metric inequalities of the form \eqref{ANN-ineq} 
was known explicitly when the first version of this paper was written. 
Moreover, to the best of the author's knowledge, at that time, no $\mathrm{CAT}(0)$ quadratic metric inequality 
that is not a necessary condition for a metric space to satisfy even the $\boxtimes$-inequalities was known explicitly, 
although the existence of such inequalities was guaranteed by the construction of Lebedeva's $6$-point metric space. 
After the first version of this paper was written, the present author \cite{toyoda-six} found a $\mathrm{CAT}(0)$ quadratic metric inequality 
that fails in Lebedeva's $6$-point metric space. 
This provides the first explicit example of a $\mathrm{CAT}(0)$ quadratic metric inequality 
that is not a necessary condition for a metric space to satisfy the $\boxtimes$-inequalities. 
Moreover, when combined with the result of the present paper, the inequality found in \cite{toyoda-six} 
turned out to be the first explicit example of a $\mathrm{CAT}(0)$ quadratic metric inequality 
that is not a necessary condition for a metric space to satisfy all $\mathrm{CAT}(0)$ quadratic metric inequalities of the form \eqref{ANN-ineq} as well 
(see \cite[Corollary 1.5]{toyoda-six}).
\end{remark}

\subsection{Organization of the paper}
This paper is organized as follows. 
In Section \ref{reformulation-sec}, we reformulate and generalize 
the Andoni--Naor--Neiman inequalities \eqref{ANN-ineq}, 
and prove that the validity of all inequalities of the form \eqref{main-th-pi-ineq} 
implies the validity of all inequalities of the form \eqref{ANN-ineq}. 
In Section \ref{Lebedeva-sec}, we recall the definition of Lebedeva's $6$-point metric space. 
In Section \ref{proof-sec}, we prove that 
Lebedeva's $6$-point metric space 
satisfies the condition $\mathrm{(i)}$ in Theorem~\ref{main-th}, 
which proves Theorem~\ref{main-th}. 

\section{A reformulation of the Andoni--Naor--Neiman inequalities}\label{reformulation-sec}

In this section, we slightly reformulate and generalize the Andoni--Naor--Neiman inequalities \eqref{ANN-ineq}. 
We first recall the definition of a $\mathrm{CAT}(0)$ space. 

\begin{definition}\label{CAT(0)-def}
A complete metric space $(X,d_X )$ is called a {\em $\mathrm{CAT}(0)$ space} if 
for any $x,y\in X$ and any $t\in\lbrack 0,1\rbrack$, there exists $z\in X$ that satisfies 
\begin{equation}\label{CAT(0)-ineq}
d_X (w,z)^2
\leq
(1-t)d_X (w,x)^2 +td_X (w,y)^2 -t(1-t)d_X (x,y)^2
\end{equation}
for any $w\in X$. 
\end{definition}

\begin{remark}
If $d_X$ is a semimetric on $X$, 
or in other words, if $(X,d_X )$ satisfies the axioms of a metric space 
except for the requirement that $d_X$ satisfies the triangle inequality, 
then for any $x,y,w\in X$, the triangle inequality 
\begin{equation*}
d_X  (x,y) \leq d_X (w,x)+d_X (w,y)
\end{equation*}
holds if and only if the inequality 
\begin{equation}\label{sq-tri-ineq}
t(1-t)d_X (x,y)^2 \leq (1-t)d_X (w,x)^2 +td_X (w,y)^2
\end{equation}
holds for any $t\in\lbrack 0,1\rbrack$. 
We remark that \eqref{sq-tri-ineq} follows from \eqref{CAT(0)-ineq}. 
\end{remark}

Let $(X,d_X )$ be a $\mathrm{CAT}(0)$ space, and let $x_1 ,\ldots ,x_n \in X$. 
Suppose $p_1 ,\ldots ,p_n$ are positive real numbers with $\sum_{i=1}^n p_i =1$. 
Then there exists a unique point $z\in X$ that satisfies 
\begin{equation}\label{variance-ineq}
d_X (w ,z)^2 +\sum_{i=1}^n p_i d_X (z, x_i )^2
\leq
\sum_{i=1}^n p_i d_X (w, x_i )^2
\end{equation}
for any $w\in X$ (see Lemma~4.4 and Theorem~4.9 in \cite{St}). 
The point $z\in X$ with this property is called the {\em barycenter} of the probability measure $\mu =\sum_{i=1}^n p_i \delta_{x_i}$ on $X$, 
and denoted by $\mathrm{bar}(\mu)$, 
where 
$\delta_{x_i}$ is the Dirac measure at $x_i$. 
It is easily seen that 
the point $z$ in Definition \ref{CAT(0)-def} is no other than $\mathrm{bar}\left( (1-t)\delta_x +t\delta_y\right)$. 
 
Although the proof of the following proposition is just a slight reformulation of the proof of Lemma~27 in \cite{ANN}, 
this proposition will play a key role in the proof of Theorem~\ref{main-th}. 
We denote by $\binom{\lbrack n\rbrack}{2}$ the set of all $2$-element subsets of $\lbrack n\rbrack$.

\begin{proposition}[cf. Lemma~27 in \cite{ANN}]\label{pi-prop}
Fix a positive integer $n$. 
Suppose $p_1 ,\ldots ,p_n$, $q_1 ,\ldots ,q_n$ are 
nonnegative real numbers 
that satisfy $\sum_{i=1}^n p_i =\sum_{j=1}^n q_j =1$. 
Suppose $\left(\pi_{ij}\right)$ is an $n\times n$ matrix with nonnegative real entries 
that satisfies 
\begin{equation*}
\sum_{k=1}^n \pi_{ik}=p_i+q_i
\end{equation*}
for every $i\in\lbrack n\rbrack$. 
Suppose $(X,d_X)$ is a $\mathrm{CAT}(0)$ space and $x_1 ,\ldots ,x_n \in X$. 
We set 
\begin{equation*}
z=\mathrm{bar}\left(\sum_{i=1}^n p_i \delta_{x_i}\right) ,\quad
z_{ij}=\mathrm{bar}\left(\frac{\pi_{ij}}{\pi_{ij}+\pi_{ji}}\delta_{x_i} +\frac{\pi_{ji}}{\pi_{ij}+\pi_{ji}}\delta_{x_j}\right)
\end{equation*}
for any $i,j\in\lbrack n\rbrack$ with $\pi_{ij}+\pi_{ji}>0$. 
Then we have 
\begin{multline*}
\sum_{i,j\in\lbrack n\rbrack : \pi_{ij}+\pi_{ji}>0}\pi_{ij}d_{X}(z,z_{ij})^2
+
\frac{1}{2}\sum_{i,j\in\lbrack n\rbrack : \pi_{ij}+\pi_{ji}>0}
\frac{\pi_{ij}\pi_{ji}}{\pi_{ij}+\pi_{ji}}d_X (x_i ,x_j)^2 \\
\leq
\sum_{i=1}^{n}\sum_{j=1}^{n}p_i q_j d_X(x_i ,x_j)^2 .
\end{multline*}
\end{proposition}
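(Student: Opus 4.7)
The plan is to apply the defining variance inequality of the barycenter (the displayed inequality in the proposition's preceding remarks) twice: once at each two–point barycenter $z_{ij}$ on the geodesic between $x_i$ and $x_j$, and once at the global barycenter $z$ of the measure $\sum p_i\delta_{x_i}$.

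First, for each $(i,j)$ with $\pi_{ij}+\pi_{ji}>0$, the point $z_{ij}$ is the barycenter of a two-point mass, so the $\mathrm{CAT}(0)$ inequality gives, for every $w\in X$,
\begin{equation*}
d_X(w,z_{ij})^2+\frac{\pi_{ij}\pi_{ji}}{(\pi_{ij}+\pi_{ji})^2}d_X(x_i,x_j)^2
\leq \frac{\pi_{ij}}{\pi_{ij}+\pi_{ji}}d_X(w,x_i)^2+\frac{\pi_{ji}}{\pi_{ij}+\pi_{ji}}d_X(w,x_j)^2 .
\end{equation*}
Setting $w=z$ and clearing the common denominator yields
\begin{equation*}
(\pi_{ij}+\pi_{ji})d_X(z,z_{ij})^2+\frac{\pi_{ij}\pi_{ji}}{\pi_{ij}+\pi_{ji}}d_X(x_i,x_j)^2
\leq \pi_{ij}d_X(z,x_i)^2+\pi_{ji}d_X(z,x_j)^2 .
\end{equation*}
I then sum this over unordered pairs $\{i,j\}$ with $i\neq j$ and $\pi_{ij}+\pi_{ji}>0$. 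Using $z_{ij}=z_{ji}$, the left-hand first sum becomes $\sum_{(i,j):\pi_{ij}+\pi_{ji}>0,\,i\neq j}\pi_{ij}d_X(z,z_{ij})^2$, while the diagonal $i=j$ contribution (if any $\pi_{ii}>0$) is exactly $\pi_{ii}d_X(z,x_i)^2$, matching the fact that $z_{ii}=x_i$. The right-hand side collapses, via the hypothesis $\sum_{j}\pi_{ij}=p_i+q_i$, to $\sum_i(p_i+q_i)d_X(z,x_i)^2$. The upshot is
\begin{equation*}
\sum_{(i,j):\pi_{ij}+\pi_{ji}>0}\pi_{ij}d_X(z,z_{ij})^2
+\frac{1}{2}\sum_{(i,j):\pi_{ij}+\pi_{ji}>0}\frac{\pi_{ij}\pi_{ji}}{\pi_{ij}+\pi_{ji}}d_X(x_i,x_j)^2
\leq \sum_{i=1}^n(p_i+q_i)d_X(z,x_i)^2 .
\end{equation*}

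To close the argument, I use the variance inequality \eqref{variance-ineq} for the barycenter $z$ with test point $w=x_j$, multiply by $q_j$, and sum in $j$; since $\sum_j q_j=1$, the $\sum_i p_i d_X(z,x_i)^2$ term combines with $\sum_j q_j d_X(z,x_j)^2$ to produce $\sum_i(p_i+q_i)d_X(z,x_i)^2$, and the right-hand side becomes $\sum_{i,j}p_iq_jd_X(x_i,x_j)^2$. Chaining with the previous display finishes the proof. The only real obstacle is bookkeeping — correctly translating between ordered and unordered pair summations, absorbing possible diagonal entries $\pi_{ii}$, and verifying that the constraint $\sum_j\pi_{ij}=p_i+q_i$ is precisely what makes the row sums telescope into $(p_i+q_i)$; the rest is a direct and essentially forced application of the CAT(0) variance inequality.
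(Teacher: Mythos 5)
Your proof is correct and is essentially the paper's own argument: both combine the variance inequality \eqref{variance-ineq} tested at the points $x_j$ (weighted by $q_j$) with the two-point barycenter inequality \eqref{CAT(0)-ineq} applied at each $z_{ij}$ with $w=z$, using $\sum_j\pi_{ij}=p_i+q_i$ to rewrite $\sum_i(p_i+q_i)d_X(z,x_i)^2$; you merely chain the two steps in the opposite order. The bookkeeping over ordered versus unordered pairs and the diagonal terms is handled the same way as in the paper.
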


\begin{proof}
It follows from the inequality \eqref{variance-ineq} that  
\begin{equation*}
d_X (x_j ,z)^2 +\sum_{i=1}^n p_i d_X (x_i ,z)^2
\leq
\sum_{i=1}^n p_i d_X (x_i ,x_j)^2
\end{equation*}
holds for each $j\in\lbrack n\rbrack$. 
By multiplying this inequality by $q_j$ and summing over $j\in\lbrack n\rbrack$, we obtain 
\begin{equation*}
\sum_{j=1}^n q_j d_X (x_j ,z)^2 +\sum_{i=1}^n p_i d_X (x_i ,z)^2
\leq
\sum_{i=1}^n \sum_{j=1}^n p_i q_j d_X (x_i ,x_j)^2 .
\end{equation*}
Since we have $z_{ij}=z_{ji}$ for any $i,j\in\lbrack n\rbrack$ with $\pi_{ij}+\pi_{ji}>0$, 
it follows from 
\eqref{CAT(0)-ineq} that 
\begin{align*}
&\sum_{j=1}^{n}q_j d_X (x_j ,z)^2 +\sum_{i=1}^n p_i d_X (x_i ,z)^2
=
\sum_{k=1}^{n}(p_k +q_k )d_X (x_k ,z)^2
=
\sum_{k=1}^{n}\sum_{l=1}^{n}\pi_{kl}d_X (x_k ,z)^2 \\
&\hspace{10mm}=
\sum_{k=1}^{n}\pi_{kk}d_{X}(x_{k} ,z)^2 \\
&\hspace{20mm}
+\sum_{\{k,l\}\in\binom{\lbrack n\rbrack}{2} : \pi_{kl} +\pi_{lk}> 0}
(\pi_{kl}+\pi_{lk})
\left(\frac{\pi_{kl}}{\pi_{kl}+\pi_{lk}}d_X (x_k ,z)^2 +\frac{\pi_{lk}}{\pi_{kl}+\pi_{lk}}d_X (x_l ,z)^2 \right) \\
&\hspace{10mm}\geq
\sum_{k\in\lbrack n\rbrack : \pi_{kk}> 0}\pi_{kk}d_{X}(z_{kk} ,z)^2 \\
&\hspace{20mm}
+\sum_{\{k,l\}\in\binom{\lbrack n\rbrack}{2} : \pi_{kl} +\pi_{lk}> 0}
(\pi_{kl}+\pi_{lk})
\left(d_X (z,z_{kl})^2 +\frac{\pi_{kl}\pi_{lk}}{(\pi_{kl}+\pi_{lk})^2}d_X (x_k ,x_l)^2 \right) \\
&\hspace{10mm}=
\sum_{i,j\in\lbrack n\rbrack : \pi_{ij}+\pi_{ji}> 0}\pi_{ij}d_{X}(z,z_{ij})^2
+
\frac{1}{2}\sum_{i,j\in\lbrack n\rbrack : \pi_{ij}+\pi_{ji}> 0}
\frac{\pi_{ij}\pi_{ji}}{\pi_{ij}+\pi_{ji}}d_X (x_i ,x_j)^2 ,
\end{align*}
which proves the proposition. 
\end{proof}

\begin{remark}
Because the statement of Proposition~\ref{pi-prop} is symmetric with respect to $(p_1 ,\ldots ,p_n )$ and $(q_1 ,\ldots ,q_n )$ 
except for the definition of $z$, the same statement holds true 
if we set $z=\mathrm{bar}(\sum_{j=1}^n q_j \delta_{x_j})$ 
instead of $z=\mathrm{bar}(\sum_{i=1}^n p_i \delta_{x_i})$. 
\end{remark}

The following corollary follows immediately from Proposition~\ref{pi-prop}. 

\begin{corollary}[cf. Lemma~27 in \cite{ANN}]\label{pi-coro}
Fix a positive integer $n$. 
Suppose $p_1 ,\ldots ,p_n$, $q_1 ,\ldots ,q_n$, and 
$(\pi_{ij})$ are as in the statement of Proposition~\ref{pi-prop}. 
Suppose $(X,d_X)$ is a $\mathrm{CAT}(0)$ space and $x_1 ,\ldots ,x_n \in X$. 
Then we have 
\begin{equation}\label{pi-ineq}
\frac{1}{2}\sum_{i,j\in\lbrack n\rbrack : \pi_{ij}+\pi_{ji}> 0}
\frac{\pi_{ij}\pi_{ji}}{\pi_{ij}+\pi_{ji}}d_X (x_i ,x_j)^2
\leq
\sum_{i=1}^{n}\sum_{j=1}^{n}p_i q_j d_X(x_i ,x_j)^2 .
\end{equation}
\end{corollary}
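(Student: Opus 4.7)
The plan is essentially immediate: Proposition~\ref{pi-prop} already supplies an inequality whose right-hand side coincides with the right-hand side of \eqref{pi-ineq}, but whose left-hand side has an \emph{extra} summand, namely
\[
\sum_{i,j\in\lbrack n\rbrack\,:\,\pi_{ij}+\pi_{ji}>0}\pi_{ij}\,d_X(z,z_{ij})^2,
\]
sitting in front of the quantity appearing on the left of \eqref{pi-ineq}. So I would simply invoke Proposition~\ref{pi-prop} with the given data $p_1,\ldots,p_n$, $q_1,\ldots,q_n$, $(\pi_{ij})$, $(X,d_X)$, and $x_1,\ldots,x_n$, defining $z$ and the points $z_{ij}$ exactly as in the proposition. (All the hypotheses match the corollary's by construction.)

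The only observation needed is that this extra summand is nonnegative: each coefficient $\pi_{ij}$ is nonnegative by assumption, and each $d_X(z,z_{ij})^2$ is a squared distance, hence nonnegative. Discarding this nonnegative term on the left of the inequality of Proposition~\ref{pi-prop} weakens it to precisely \eqref{pi-ineq}, which is what we wanted. Since the only step is dropping a manifestly nonnegative quantity, there is no real obstacle here; the substance of the argument has already been absorbed into the proof of Proposition~\ref{pi-prop}, and the corollary is a one-line consequence.
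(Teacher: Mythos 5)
Your proof is correct and is exactly how the paper derives the corollary: the paper simply states that it ``follows immediately from Proposition~\ref{pi-prop},'' which is precisely your observation that the barycenter term $\sum \pi_{ij}d_X(z,z_{ij})^2$ is nonnegative and can be discarded.
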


We define the following condition. 

\begin{definition}
Fix a positive integer $n$. 
Let $(X,d_X )$ be a semimetric space. 
We say that $X$ satisfies the {\em $\mathrm{ANN}(n)$ inequalities} if 
any points $x_1 ,\ldots ,x_n \in X$ satisfy the 
inequality \eqref{pi-ineq} for any $p_1 ,\ldots ,p_n ,q_1 ,\ldots ,q_n$ and 
$(\pi_{ij})$ as in the statement of Proposition~\ref{pi-prop}. 
\end{definition}

\begin{remark}
It is easily seen that for every $t\in\lbrack 0,1\rbrack$, the inequality of the form \eqref{sq-tri-ineq} belongs to 
the $\mathrm{ANN}(n)$ inequalities for any $n\geq 3$. 
Therefore, a semimetric space is a metric space whenever it satisfies the $\mathrm{ANN}(n)$ inequalities for some integer $n\geq 3$. 
\end{remark}

To set up some basic facts about the $\mathrm{ANN}(n)$ inequalities, 
we recall the following elementary fact. 

\begin{proposition}\label{elementary-prop}
Let $n$ be a positive integer. 
Suppose $a_1,\ldots,a_n$ and $b_1,\ldots,b_n$ are nonnegative real numbers 
with $a_i+b_i>0$ for every $i\in\lbrack n\rbrack$. 
Then 
\begin{equation*}
\sum_{i=1}^{n}\frac{a_i b_i}{a_i +b_i}
\leq
\frac{\left(\sum_{i=1}^{n}a_{i}\right)
\left(\sum_{i=1}^{n}b_{i}\right)}{\left(\sum_{i=1}^{n}a_{i}\right) +\left(\sum_{i=1}^{n}b_{i}\right)} .
\end{equation*}
\end{proposition}

\begin{proof}
We prove the proposition by induction on $n$. The case $n=1$ is trivial. 
Assume it holds for some positive integer $n$. 
Let $a_1,\ldots,a_{n+1}$ and $b_1,\ldots,b_{n+1}$ be nonnegative real numbers 
with $a_i+b_i>0$ for every $i\in\lbrack n+1\rbrack$. 
Set $a=\sum_{i=1}^{n}a_i$ and $b=\sum_{i=1}^{n}b_{i}$. 
A straightforward calculation shows that 
\begin{align*}
&\ \left(a+a_{n+1}\right)\left(b+b_{n+1}\right)\left(a+b\right)\left(a_{n+1}+b_{n+1}\right) \\
&\quad
-
ab\left(a_{n+1}+b_{n+1}\right)\left(a+a_{n+1}+b+b_{n+1}\right)
-
a_{n+1}b_{n+1}\left(a+b\right)\left(a+a_{n+1}+b+b_{n+1}\right) \\
&\qquad
=
\left(ab_{n+1}-a_{n+1}b\right)^2
\geq
0 ,
\end{align*}
which implies that 
\begin{equation}\label{two-terms-ineq}
\frac{ab}{a+b}+\frac{a_{n+1}b_{n+1}}{a_{n+1}+b_{n+1}}
\leq
\frac{\left(a+a_{n+1}\right)\left(b+b_{n+1}\right)}{a+a_{n+1}+b+b_{n+1}}.
\end{equation}
By the inductive hypothesis, we have 
\begin{equation}\label{inductive-hyp-ineq}
\sum_{i=1}^{n}\frac{a_i b_i}{a_i +b_i}
\leq
\frac{ab}{a+b}.
\end{equation}
Combining \eqref{two-terms-ineq} and \eqref{inductive-hyp-ineq}, we obtain
\begin{equation*}
\sum_{i=1}^{n+1}\frac{a_i b_i}{a_i +b_i}
\leq
\frac{\left(\sum_{i=1}^{n+1}a_{i}\right)
\left(\sum_{i=1}^{n+1}b_{i}\right)}{\left(\sum_{i=1}^{n+1}a_{i}\right)+\left(\sum_{i=1}^{n+1}b_{i}\right)} ,
\end{equation*}
which completes the proof. 
\end{proof}

We will use the following proposition in Section \ref{proof-sec}. 

\begin{proposition}\label{n-points-prop}
Let $n$ be a positive integer, and let 
$(X=\{ x_{1} ,\ldots ,x_{n} \} ,d_X )$ be a semimetric space with $|X|=n$. 
Assume that the inequality 
\begin{equation*}
\frac{1}{2}\sum_{i,j\in\lbrack n\rbrack : \pi_{ij}+\pi_{ji}>0}\frac{\pi_{ij}\pi_{ji}}{\pi_{ij}+\pi_{ji}}d_X (x_i ,x_j)^2
\leq
\sum_{i=1}^{n}\sum_{j=1}^{n}p_i q_j d_X(x_i ,x_j)^2
\end{equation*}
holds true for any $p_1 ,\ldots ,p_n ,q_1 ,\ldots ,q_n$, and 
$(\pi_{ij})$ as in the statement of Proposition~\ref{pi-prop}. 
Then $X$ satisfies the $\mathrm{ANN}(m)$ inequalities for all positive integers $m$. 
\end{proposition}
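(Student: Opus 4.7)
The plan is to reduce an arbitrary $\mathrm{ANN}(m)$ instance on $X$ to the assumed $n$-point inequality by aggregating coefficients along the labeling map determined by the chosen points. Given $y_1 ,\ldots ,y_m \in X$, since $X=\{ x_1 ,\ldots ,x_n\}$ has exactly $n$ points, there is a map $\sigma :\lbrack m\rbrack\to\lbrack n\rbrack$ with $y_k =x_{\sigma(k)}$ for every $k$. Given data $(p_i )_{i\in\lbrack m\rbrack}$, $(q_j )_{j\in\lbrack m\rbrack}$, $(\pi_{ij})_{i,j\in\lbrack m\rbrack}$ satisfying the hypotheses of the $\mathrm{ANN}(m)$ inequality, I would set
\[
\tilde p_a =\sum_{i\in\sigma^{-1}(a)}p_i ,\quad \tilde q_b =\sum_{j\in\sigma^{-1}(b)}q_j ,\quad \tilde\pi_{ab} =\sum_{i\in\sigma^{-1}(a)}\sum_{j\in\sigma^{-1}(b)}\pi_{ij} .
\]
A direct check then shows $\sum_a \tilde p_a =\sum_b \tilde q_b =1$ and $\sum_b \tilde\pi_{ab} =\tilde p_a +\tilde q_a$, so the aggregated data fit the hypotheses of the assumed $n$-point inequality on the specific points $x_1 ,\ldots ,x_n$.

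After grouping terms by $(\sigma(i),\sigma(j))=(a,b)$, the right-hand side of the $\mathrm{ANN}(m)$ inequality equals $\sum_{a,b\in\lbrack n\rbrack}\tilde p_a \tilde q_b d_X (x_a ,x_b )^2$, the right-hand side of the $n$-point inequality for the aggregated data. The real content is to bound the $m$-point left-hand side by the corresponding $n$-point left-hand side. Grouping the $m$-point LHS by $(\sigma(i),\sigma(j))$ and using that $d_X(x_a ,x_a )=0$, the task reduces to showing, for each ordered pair $(a,b)$ with $a\neq b$, the inequality
\[
\sum_{\substack{i\in\sigma^{-1}(a),\ j\in\sigma^{-1}(b)\\ \pi_{ij}+\pi_{ji}>0}}\frac{\pi_{ij}\pi_{ji}}{\pi_{ij}+\pi_{ji}}
\leq
\frac{\tilde\pi_{ab}\tilde\pi_{ba}}{\tilde\pi_{ab}+\tilde\pi_{ba}} ,
\]
where the right-hand side is interpreted as $0$ when $\tilde\pi_{ab}+\tilde\pi_{ba}=0$ (in which case every summand on the left is $0$ as well).

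This last inequality is precisely the iterated form of Proposition~\ref{elementary-prop}: the function $f(u,v)=uv/(u+v)$ (set to $0$ when $u+v=0$) is superadditive on nonnegative pairs, i.e., $\sum_l f(a_l ,b_l )\leq f(\sum_l a_l ,\sum_l b_l )$, which follows by induction on the number of summands using Proposition~\ref{elementary-prop} as the base step. Applying this with $(a_l ,b_l )=(\pi_{ij},\pi_{ji})$ for $(i,j)$ ranging over $\sigma^{-1}(a)\times\sigma^{-1}(b)$ yields the needed bound, noting that $\sum \pi_{ji}$ over this range is exactly $\tilde\pi_{ba}$. Combining the three observations gives
\[
\textrm{LHS}_{m}\ \leq\ \textrm{LHS}_{n}^{\tilde\pi}\ \leq\ \textrm{RHS}_{n}^{\tilde p ,\tilde q}\ =\ \textrm{RHS}_{m},
\]
where the middle inequality is the standing hypothesis, establishing the $\mathrm{ANN}(m)$ inequality. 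The only step that is not pure bookkeeping is the superadditivity of $f$, but this reduces to a straightforward induction from Proposition~\ref{elementary-prop}, so I do not expect any serious obstacle.
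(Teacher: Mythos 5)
Your proposal is correct and follows essentially the same route as the paper's proof: aggregate the data $(p_i,q_j,\pi_{ij})$ along the labeling map into $(\tilde p_a,\tilde q_b,\tilde\pi_{ab})$, check the constraints, bound the $m$-point left-hand side by the $n$-point one via the iterated superadditivity of $uv/(u+v)$ from Proposition~\ref{elementary-prop}, and invoke the hypothesis on the aggregated data. Your explicit handling of the diagonal terms and of the case $\tilde\pi_{ab}+\tilde\pi_{ba}=0$ is a minor tidying of details the paper leaves implicit.
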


\begin{proof}
Fix a positive integer $m$. 
Fix nonnegative real numbers $p_1 ,\ldots ,p_{m}$, $q_{1},\ldots ,q_{m}$ and 
an $m\times m$ matrix 
$(\pi_{ij})$ with nonnegative real entries such that 
\begin{equation*}
\sum_{k=1}^{m}p_k =\sum_{k=1}^{m}q_k =1,\quad
\sum_{k=1}^{m}\pi_{ik}=p_i +q_i
\end{equation*}
for every $i\in\lbrack m\rbrack$. 
Choose a map $\varphi :\lbrack m\rbrack\to\lbrack n\rbrack$ arbitrarily. 
For each $k,l\in\lbrack n\rbrack$, we set 
\begin{equation*}
\tilde{p}_{k}
=
\sum_{i\in\varphi^{-1}(\{ k\})}p_{i},\qquad
\tilde{q}_{k}
=
\sum_{i\in\varphi^{-1}(\{ k\})}q_{i},\qquad
\tilde{\pi}_{kl}
=
\sum_{(i,j)\in\varphi^{-1}(\{ k\})\times\varphi^{-1}(\{ l\})}\pi_{ij}.
\end{equation*}
Then it is easily seen that we have 
\begin{equation*}
\sum_{k=1}^n \tilde{p}_k
=
\sum_{k=1}^n \tilde{q}_k
=
1,\quad
\sum_{k=1}^n \tilde{\pi}_{ik}=\tilde{p}_i +\tilde{q}_i
\end{equation*}
for every $i\in\lbrack n\rbrack$. 
It follows from Proposition~\ref{elementary-prop} and the hypothesis that 
\begin{align*}
&\frac{1}{2}\sum_{i,j\in\lbrack m\rbrack : \pi_{ij}+\pi_{ji}> 0}\frac{\pi_{ij}\pi_{ji}}{\pi_{ij}+\pi_{ji}}d_X (x_{\varphi (i)},x_{\varphi (j)})^2 \\
&\quad =
\frac{1}{2}\sum_{k,l\in\lbrack n\rbrack : \tilde{\pi}_{kl}+\tilde{\pi}_{lk}> 0}
\left(
\sum_{(i,j)\in\varphi^{-1}(\{ k\})\times\varphi^{-1}(\{ l\}) : \pi_{ij}+\pi_{ji}>0}\frac{\pi_{ij}\pi_{ji}}{\pi_{ij}+\pi_{ji}}\right) d_X (x_{k},x_{l})^2 \\
&\quad\leq
\frac{1}{2}\sum_{k,l\in\lbrack n\rbrack : \tilde{\pi}_{kl}+\tilde{\pi}_{lk}> 0}
\left(
\frac{\tilde{\pi}_{kl}\tilde{\pi}_{lk}}{\tilde{\pi}_{kl}+\tilde{\pi}_{lk}}\right) d_X (x_{k},x_{l})^2 \\
&\quad\leq
\sum_{k=1}^{n}\sum_{l=1}^{n}\tilde{p}_{k}\tilde{q}_{l} d_{X}(x_{k},x_{l})^2 \\
&\quad =
\sum_{k=1}^{n}\sum_{l=1}^{n}\left(\sum_{i\in\varphi^{-1}(\{ k\})}p_{i}\right)\left(\sum_{j\in\varphi^{-1}(\{ l\})}q_{j}\right)
d_{X}(x_{k},x_{l})^2 \\
&\quad =
\sum_{k=1}^{n}\sum_{l=1}^{n}
\left(\sum_{i\in\varphi^{-1}(\{ k\})}\sum_{j\in\varphi^{-1}(\{ l\})}p_i q_j d_{X}(x_{\varphi (i)},x_{\varphi (j)})^2 \right) \\
&\quad =
\sum_{i=1}^{m}\sum_{j=1}^{m}
p_i q_j d_{X}(x_{\varphi (i)},x_{\varphi (j)})^2 ,
\end{align*}
which proves the proposition. 
\end{proof}

The following corollary follows from Proposition~\ref{n-points-prop} immediately. 

\begin{corollary}\label{n-points-coro}
Suppose $m$ and $n$ are positive integers with $m\leq n$, and 
$X$ is a semimetric space that satisfies the $\mathrm{ANN}(n)$ inequalities. 
Then $X$ satisfies the $\mathrm{ANN}(m)$ inequalities. 
\end{corollary}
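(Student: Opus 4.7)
The plan is to deduce the $\mathrm{ANN}(m)$ inequality from the assumed $\mathrm{ANN}(n)$ inequality by a direct padding construction: one pads any $m$-point configuration out to an $n$-point configuration by duplicating a chosen point and assigning the new indices zero weight. Concretely, given points $x_1 ,\ldots ,x_m \in X$ together with nonnegative numbers $p_1 ,\ldots ,p_m$, $q_1 ,\ldots ,q_m$ and an $m\times m$ matrix $(\pi_{ij})$ satisfying the hypotheses of Proposition~\ref{pi-prop} with $m$ in place of $n$, I would set $x_i :=x_1$ for $m<i\leq n$, $p_i :=q_i :=0$ for $i>m$, and $\pi_{ij}:=0$ whenever $\max (i,j)>m$. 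A direct check shows that the extended data satisfies the hypotheses of Proposition~\ref{pi-prop} for $n$ indices: the normalizations $\sum_{k=1}^n p_k =\sum_{k=1}^n q_k =1$ carry over since the appended entries vanish, and the row-sum condition $\sum_{k=1}^n \pi_{ik}=p_i +q_i$ holds trivially for $i>m$ (both sides are zero) and is unchanged for $i\leq m$ (since the added columns are zero).

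The next step is to apply the hypothesis that $X$ satisfies the $\mathrm{ANN}(n)$ inequalities to this padded configuration. Every term on either side of \eqref{pi-ineq} whose index pair $(i,j)$ involves an entry in $\{ m+1,\ldots ,n\}$ carries a zero factor: on the right-hand side the product $p_i q_j$ vanishes, and on the left-hand side the pair is excluded from the summation range because $\pi_{ij}=\pi_{ji}=0$. Consequently the $n$-index inequality collapses to precisely the desired $m$-index inequality on $x_1 ,\ldots ,x_m$ with the original coefficients. The whole argument is a bookkeeping verification, and there is no genuine obstacle; one could equivalently phrase it as a specialization of Proposition~\ref{n-points-prop}, whose proof already contains the collapsing mechanism between indexings of different sizes, but the uniform padding description above is the most transparent.
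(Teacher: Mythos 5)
Your padding argument is correct, and it is exactly the mechanism the paper's one-line deduction relies on: extending the $m$-point data by zero weights and a repeated point is the special case of the index map $\varphi\colon\lbrack m\rbrack\to\lbrack n\rbrack$ in Proposition~\ref{n-points-prop} where $\varphi$ is injective, so Proposition~\ref{elementary-prop} is not even needed and every term matches exactly. All the bookkeeping you describe (normalization, row sums, and the vanishing of the extra terms on both sides of \eqref{pi-ineq}) checks out, so the proposal is a complete proof in essentially the paper's intended way.
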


The following lemma clarifies the relation between the $\mathrm{ANN}(n)$ inequalities and 
the inequalities of the form \eqref{ANN-ineq} established by Andoni--Naor--Neiman \cite{ANN}. 

\begin{lemma}\label{reformulation-lemma}
Fix a positive integer $n$ and 
nonnegative real numbers $p_1 ,\ldots ,p_n$, $q_1 ,\ldots ,q_n$ with 
$\sum_{i=1}^n p_i =\sum_{j=1}^n q_j =1$. 
Let $(X,d_X )$ be a semimetric space. 
Suppose $x_1 ,\ldots ,x_n \in X$ are points that satisfy the inequality 
\eqref{pi-ineq} for any $n\times n$ 
matrix $\left(\pi_{ij}\right)$ with nonnegative real entries 
satisfying $\sum_{k=1}^n \pi_{ik}=p_i+q_i$ for all $i\in\lbrack n\rbrack$. 
Then, 
for any $n\times n$ matrices $A=(a_{ij})$ and $B=(b_{ij})$ with nonnegative real entries satisfying 
$\sum_{k=1}^n a_{ik}+\sum_{k=1}^n b_{kj}=p_i +q_j$ 
for all $i,j\in\lbrack n\rbrack$, 
the inequality 
\begin{equation*}
\sum_{i,j\in\lbrack n\rbrack : a_{ij}+b_{ij}>0}\frac{a_{ij}b_{ij}}{a_{ij}+b_{ij}}d_X (x_i ,x_j )^2
\leq
\sum_{i=1}^n \sum_{j=1}^n  p_i q_j d_X (x_i ,x_j )^2
\end{equation*}
holds. 
\end{lemma}

\begin{proof}
Fix $n\times n$ matrices $A=(a_{ij})$ and $B=(b_{ij})$ with nonnegative real entries that satisfy 
\begin{equation*}
\sum_{k=1}^n a_{ik}+\sum_{k=1}^n b_{kj}=p_i +q_j
\end{equation*}
for any $i,j\in\lbrack n\rbrack$. 
Set $\pi_{ij}=a_{ij}+b_{ji}$ for any $i,j\in\lbrack n\rbrack$. 
Then we have 
$\sum_{k=1}^n \pi_{ik}=p_i+q_i$ for every $i\in\lbrack n\rbrack$ clearly. 
If $a_{ij}+b_{ij}>0$ and $a_{ji}+b_{ji}>0$, then we have 
\begin{equation*}
\frac{a_{ij}b_{ij}}{a_{ij}+b_{ij}}+\frac{a_{ji}b_{ji}}{a_{ji}+b_{ji}}
\leq
\frac{(a_{ij}+b_{ji})(b_{ij}+a_{ji})}{(a_{ij}+b_{ji})+(b_{ij}+a_{ji})}
=
\frac{\pi_{ij}\pi_{ji}}{\pi_{ij}+\pi_{ji}}
\end{equation*}
by Proposition~\ref{elementary-prop}. 
If $a_{ij}+b_{ij}>0$ and $a_{ji}+b_{ji}=0$, then 
we have 
$\pi_{ij}=a_{ij}$ and $\pi_{ji}=b_{ij}$. 
If $a_{ij}+b_{ij}=0$ and $a_{ji}+b_{ji}>0$, then 
we have 
$\pi_{ij}=b_{ji}$ and $\pi_{ji}=a_{ji}$. 
Therefore, we have 
\begin{equation*}
\sum_{i,j\in\lbrack n\rbrack : a_{ij}+b_{ij}>0}\frac{a_{ij}b_{ij}}{a_{ij}+b_{ij}}d_X (x_i ,x_j )^2
\leq
\frac{1}{2}\sum_{i,j\in\lbrack n\rbrack : \pi_{ij}+\pi_{ji}>0}
\frac{\pi_{ij}\pi_{ji}}{\pi_{ij}+\pi_{ji}}d_X (x_i ,x_j)^{2} .
\end{equation*}
Combining this with the hypothesis that the inequality \eqref{pi-ineq} holds true, we obtain the desired inequality. 
\end{proof}

The following corollary is an immediate consequence of Lemma~\ref{reformulation-lemma} and Corollary~\ref{n-points-coro}. 

\begin{corollary}\label{pi-to-AB-coro}
Let $n_0$ be a positive integer, and let $n\in\lbrack n_0 \rbrack$. 
Let $X$ be a semimetric space that satisfies the $\mathrm{ANN}(n_0 )$ inequalities. 
Then any $x_1 ,\ldots ,x_n \in X$ satisfy the inequality \eqref{ANN-ineq} for 
any $m$, $c_1 ,\ldots ,c_m$, $\left( p_{i}^{1}\right)_{i\in\lbrack n\rbrack},\ldots ,\left( p_{i}^{m}\right)_{i\in\lbrack n\rbrack}$, 
$\left( q_{j}^{1}\right)_{j\in\lbrack n\rbrack},\ldots ,\left( q_{j}^{m}\right)_{j\in\lbrack n\rbrack}$, 
$A_1 ,\ldots ,A_m$, $B_1 ,\ldots ,B_m$ as in the statement of Theorem~\ref{ANN-ineq-th}. 
\end{corollary}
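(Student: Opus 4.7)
The plan is to assemble Corollary~\ref{n-points-coro} and Lemma~\ref{reformulation-lemma} and then take a $c_k$-weighted sum over $k\in\lbrack m\rbrack$. Both ingredients are already in place, so the proof is essentially bookkeeping: reduce the point count, apply the lemma once per index $k$, and re-assemble.

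First I would apply Corollary~\ref{n-points-coro}: because $n\in\lbrack n_0\rbrack$ and $X$ satisfies the $\mathrm{ANN}(n_0)$ inequalities, $X$ also satisfies the $\mathrm{ANN}(n)$ inequalities. Thus, for each fixed $k\in\lbrack m\rbrack$, the data $(p_i^k)_{i\in\lbrack n\rbrack}$ and $(q_j^k)_{j\in\lbrack n\rbrack}$ from Theorem~\ref{ANN-ineq-th} (which are positive and so in particular nonnegative, with sums equal to $1$) fit the summation constraints required by the definition of the $\mathrm{ANN}(n)$ inequalities. Consequently, $x_1 ,\ldots ,x_n$ satisfy \eqref{pi-ineq} with $p_i = p_i^k$ and $q_j = q_j^k$ for every $n\times n$ matrix $(\pi_{ij})$ with nonnegative entries satisfying $\sum_{j}\pi_{ij}=p_i^k +q_i^k$ for each $i\in\lbrack n\rbrack$.

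Next I would apply Lemma~\ref{reformulation-lemma} with that choice of $p_i$, $q_j$, and with $A=A_k$, $B=B_k$. Its hypothesis has just been verified in the preceding step, so its conclusion gives
\begin{equation*}
\sum_{i,j\in\lbrack n\rbrack : a_{ij}^{k}+b_{ij}^{k}>0}\frac{a_{ij}^{k}b_{ij}^{k}}{a_{ij}^{k}+b_{ij}^{k}}d_X (x_i ,x_j )^2
\leq
\sum_{i=1}^{n}\sum_{j=1}^{n}p_i^k q_j^k d_X (x_i ,x_j )^2
\end{equation*}
for every $k\in\lbrack m\rbrack$. Multiplying by $c_k >0$ and summing over $k\in\lbrack m\rbrack$ yields precisely \eqref{ANN-ineq}. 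No serious obstacle appears here, since the real content has already been absorbed into Lemma~\ref{reformulation-lemma} (which effects the conversion $\pi_{ij}=a_{ij}+b_{ji}$ together with the pairwise estimate supplied by Proposition~\ref{elementary-prop}) and into Corollary~\ref{n-points-coro} (which reduces the point count via a folding map $\varphi$).
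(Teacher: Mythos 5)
Your proposal is correct and follows exactly the route the paper intends: the paper states that this corollary is an immediate consequence of Lemma~\ref{reformulation-lemma} and Corollary~\ref{n-points-coro}, and your write-up simply makes explicit the per-$k$ application of the lemma followed by the $c_k$-weighted summation. Nothing is missing.
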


\section{Lebedeva's 6-point metric space}\label{Lebedeva-sec}

In this section, we recall the $6$-point metric space constructed by Nina Lebedeva, 
which appeared in the arXiv version of \cite[\S 7.2]{AKP}. 

For any two points $a$ and $b$ in a Euclidean space, 
we denote by $\lbrack a,b\rbrack$ the line segment joining $a$ and $b$, and by $(a,b)$ the 
set $\lbrack a,b\rbrack\setminus\{ a,b\}$. 
We denote by $\mathrm{conv}(S)$ the convex hull of a subset $S$ of the $3$-dimensional Euclidean space $\mathbb{R}^3$. 
Suppose $x_1 ,\ldots ,x_6$ are six distinct points in 
$\mathbb{R}^3$ that satisfy the following conditions: 
\begin{align}
|(x_1 ,x_3)\cap (x_2, x_4)|&=1;\label{lebedeva-condition1}\\
\left|(x_5 ,x_6)\cap\left(\mathrm{conv}(\{ x_1 ,x_2 ,x_3 ,x_4\})\setminus
\cup_{\{ i,j\}\in\binom{\lbrack 4\rbrack}{2}}\lbrack x_i ,x_j\rbrack
\right)\right| &=1. \label{lebedeva-condition2}
\end{align}
It follows from the conditions \eqref{lebedeva-condition1} and \eqref{lebedeva-condition2} that  
the points $x_{1},\ldots ,x_{4}$ form the vertices of a convex quadrilateral in a $2$-dimensional affine subspace of $\mathbb{R}^3$, 
and that the points $x_{1},\ldots ,x_{6}$ form the vertices of a nonregular convex octahedron in $\mathbb{R}^3$. 
We set $L=\{ x_1 ,x_2 ,x_3 ,x_4 ,x_5 ,x_6 \}$. 
For any $\varepsilon\geq 0$, 
we define a map $d_{\varepsilon}:L\times L\to\lbrack 0,\infty )$ by 
\begin{equation}\label{L-distance-def}
d_{\varepsilon} (a,b)
=
\begin{cases}
\|a-b\| +\varepsilon ,\quad\textrm{if }\{ a,b\}=\{ x_5 ,x_6\} ,\\
\|a-b\| ,\quad\textrm{if }\{ a,b\}\neq\{ x_5 ,x_6\} ,
\end{cases}
\end{equation}
where $\| a-b\|$ is the Euclidean norm of $a-b$. 

\begin{theorem}[Lebedeva]\label{Lebedeva-th}
Fix six distinct points $x_1 ,\ldots ,x_6 \in\mathbb{R}^3$ that satisfy \eqref{lebedeva-condition1} and \eqref{lebedeva-condition2}. 
Set $L=\{ x_1 ,\ldots ,x_6 \}$. 
Then there exists $C\in (0,\infty )$ such that 
for any $\varepsilon\in (0,C\rbrack$, 
$(L,d_{\varepsilon})$ defined by \eqref{L-distance-def} is a metric space that satisfies 
the $(2+2)$-point comparison and the $(4+2)$-point comparison, 
but does not admit an isometric embedding into any $\mathrm{CAT}(0)$ space. 
\end{theorem}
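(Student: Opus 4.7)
The plan is to address the three claims—the metric axioms, the two comparisons, and non-embeddability—in sequence, using throughout that at $\varepsilon=0$ the set $L$ is isometric to a $6$-point subset of $\mathbb{R}^{3}$.

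For the metric axioms, the only triangle inequalities at risk under the perturbation are those of the form $d_{\varepsilon}(x_{5},x_{6})\leq d_{\varepsilon}(x_{5},x_{i})+d_{\varepsilon}(x_{i},x_{6})$ for $i\in\lbrack 4\rbrack$. By \eqref{lebedeva-condition2} the open segment $(x_{5},x_{6})$ crosses $\mathrm{conv}(\{x_{1},x_{2},x_{3},x_{4}\})$ through its interior and away from any vertex, so $x_{5},x_{i},x_{6}$ are non-collinear for each $i\in\lbrack 4\rbrack$ and the Euclidean triangle inequality is strict with positive slack. Choosing $C_{1}$ smaller than the minimum of these slacks makes $(L,d_{\varepsilon})$ a metric space for $\varepsilon\in\lbrack 0,C_{1}\rbrack$. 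For the $(2+2)$- and $(4+2)$-comparisons I would argue by perturbation from $\varepsilon=0$, where $L\subset\mathbb{R}^{3}$ automatically satisfies every $\mathrm{CAT}(0)$ quadratic metric inequality on six points. Each comparison inequality is an affine function of $\varepsilon$ since only $d_{\varepsilon}(x_{5},x_{6})^{2}=(\|x_{5}-x_{6}\|+\varepsilon)^{2}$ varies, so it suffices to analyze the coefficient of $d(x_{5},x_{6})^{2}$: wherever that coefficient has the favorable sign the inequality persists automatically, and where it has the unfavorable sign one uses that the base inequality is strict at $\varepsilon=0$ together with compactness of the parameter family to extract a uniform threshold $C_{2}>0$.

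The main obstacle is the non-embeddability. The key geometric input is the triple concurrence forced by \eqref{lebedeva-condition1} and \eqref{lebedeva-condition2}: there is a unique point $p_{0}\in(x_{1},x_{3})\cap(x_{2},x_{4})\cap(x_{5},x_{6})$, yielding in particular the Euclidean equality $\|x_{5}-p_{0}\|+\|p_{0}-x_{6}\|=\|x_{5}-x_{6}\|$. The strategy is to show that in any $\mathrm{CAT}(0)$ space $Y$ hosting an isometric image $\iota(L)$, a corresponding point $z\in Y$ must exist with $d_{Y}(\iota(x_{5}),z)+d_{Y}(z,\iota(x_{6}))\leq\|x_{5}-x_{6}\|$, contradicting the perturbed value $d_{\varepsilon}(x_{5},x_{6})=\|x_{5}-x_{6}\|+\varepsilon>\|x_{5}-x_{6}\|$. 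I expect $z$ to arise either as the intersection of the two diagonal geodesics $\lbrack\iota(x_{1}),\iota(x_{3})\rbrack$ and $\lbrack\iota(x_{2}),\iota(x_{4})\rbrack$ (guaranteed to exist via Reshetnyak majorization of the convex comparison quadrilateral), or as an appropriately weighted barycenter via Proposition~\ref{pi-prop}; the Euclidean relations among sides and diagonals of the quadrilateral then pin down the distances from $z$ to $\iota(x_{5})$ and $\iota(x_{6})$ as equalities in the comparison model, delivering the required bound. Setting $C=\min(C_{1},C_{2})$ completes the argument.

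The delicate point is to make the rigidity argument tight: Reshetnyak's majorization gives only one-sided distance bounds $d_{Y}(z,\iota(x_{i}))\leq\|p_{0}-x_{i}\|$, while the barycentric variance inequality \eqref{variance-ineq} needs the reverse direction to deliver upper bounds on $d_{Y}(\iota(x_{5}),z)$ and $d_{Y}(\iota(x_{6}),z)$. Reconciling these one-sided bounds requires a careful choice of configuration---likely using the concurrence at $p_{0}$ to set up the Euclidean comparisons so that every intermediate inequality is a tight equality---and this is where the main work lies. A secondary difficulty is the coefficient-sign verification in the $(4+2)$-family, as this is intrinsically a $6$-point family in which $d_{\varepsilon}(x_{5},x_{6})$ can enter multiple inequalities with either sign depending on the weight configuration.
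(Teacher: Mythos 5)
First, note that the paper does not prove this statement at all: Theorem~\ref{Lebedeva-th} is quoted as Lebedeva's result with a citation to \S 7.2 of \cite{AKP}, so there is no internal proof to compare against. Judged on its own terms, your sketch has the right overall shape but contains two genuine gaps.

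The first gap is in the verification of the $(2+2)$- and $(4+2)$-point comparisons. Your plan is ``affine in $\varepsilon$, plus strictness at $\varepsilon=0$, plus compactness of the parameter family.'' The problem is that the base inequalities at $\varepsilon=0$ are \emph{not} uniformly strict: both the slack and the unfavorable coefficient of $d(x_5,x_6)^2$ vanish together at the boundary of the parameter domain, so the admissible threshold for $\varepsilon$ is governed by the infimum of a ratio of the form $0/0$. Concretely, for the $\boxtimes$-inequality with quadruple $(x_5,x_1,x_6,x_5)$ and weights $s,t$, the coefficient of $d(x_5,x_6)^2$ is $s(t-1+s)$ and the Euclidean slack is $\|s(x_6-x_5)-(1-t)(x_1-x_5)\|^2$; both tend to $0$ as $s\to 0$, and one must prove that the slack dominates $s(1-s-t)f(\varepsilon)$ uniformly, which requires an explicit angle/distance estimate (here $\|s(x_6-x_5)-(1-t)(x_1-x_5)\|^2\geq(1-\cos\measuredangle x_6x_5x_1)(1-t)^2\|x_1-x_5\|^2$ saves the day). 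Compactness alone does not yield this; the whole content of the paper's Theorem~\ref{main-ANN-ineq-th} is precisely such a quantitative slack-versus-coefficient analysis, with the constants $h$, $H$, $\theta$, $\delta$ introduced to control the degenerate regimes. Moreover, the $(4+2)$-point comparison is not a parameterized family of quadratic metric inequalities but an \emph{existence} statement for a model configuration in Hilbert space, so ``analyze the coefficient of $d(x_5,x_6)^2$'' does not directly apply to it; one must either construct the model configurations or first dualize the condition into inequalities.

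The second gap is in the non-embeddability argument. Your chain of upper bounds is the right mechanism (and, contrary to your stated worry, no reverse bounds are needed for the diagonal-intersection route: two applications of \eqref{CAT(0)-ineq} force the points at the correct parameters on the geodesics $\lbrack\iota(x_1),\iota(x_3)\rbrack$ and $\lbrack\iota(x_2),\iota(x_4)\rbrack$ to coincide at a point $z$ with $d_Y(\iota(x_5),z)\leq\|x_5-p_0\|$ and $d_Y(\iota(x_6),z)\leq\|x_6-p_0\|$). The real problem is that you silently assume the ``triple concurrence'' $p_0\in(x_1,x_3)\cap(x_2,x_4)\cap(x_5,x_6)$, which is \emph{not} implied by \eqref{lebedeva-condition1} and \eqref{lebedeva-condition2}: the point $y_0$ where $(x_5,x_6)$ crosses the quadrilateral is only required to lie in $\mathrm{conv}(\{x_1,\dots,x_4\})$ minus the edges, not at the intersection of the diagonals. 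With $z$ located at $p_0\neq y_0$ one only gets $d_Y(\iota(x_5),\iota(x_6))\leq\|x_5-p_0\|+\|p_0-x_6\|$, which exceeds $\|x_5-x_6\|$ and yields no contradiction. To reach a point of $Y$ realizing the Euclidean distances to $y_0$ one needs the full flat-quadrilateral rigidity (the solid convex hull of the four points embeds isometrically, via the equality case of Reshetnyak majorization), not merely the existence of the crossing point of the two diagonal geodesics; the barycentric alternative via \eqref{variance-ineq} genuinely does require the lower bounds you flag and does not close on its own. This rigidity step is the heart of Lebedeva's argument and is only gestured at in your proposal.
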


A metric space $X$ is said to satisfy the {\em $(2+2)$-comparison} if for any points $x,y,z,w\in X$ and 
any points $\tilde{x},\tilde{y},\tilde{z},\tilde{w}$ in the Euclidean plane $\mathbb{R}^2$ with 
\begin{align*}
&d_{X}(x,y)=\|\tilde{x}-\tilde{y}\| ,\quad
d_{X}(y,z)=\|\tilde{y}-\tilde{z}\| , \quad
d_{X}(z,w)=\|\tilde{z}-\tilde{w}\| ,\\
&d_{X}(w,x)=\|\tilde{w}-\tilde{x}\| ,\quad
d_{X}(x,z)=\|\tilde{x}-\tilde{z}\| ,\quad
\end{align*}
we have $d_{X}(y,w)\leq\|\tilde{y}-\tilde{p}\| +\|\tilde{p}-\tilde{w}\|$ for any $\tilde{p}\in\lbrack\tilde{x},\tilde{z}\rbrack$. 
For the definition of the $(2n+2)$-point comparison for an integer $n\geq 2$, see \cite[\S 6.2]{AKP}. 

It is known that a metric space satisfies the $(2+2)$-comparison if and only if it satisfies the $\boxtimes$-inequalities. 
Although this fact is well known to experts, we include a proof for the reader's convenience, 
by combining several results that have already appeared in the literature. 
The argument is slightly indirect because we rely on previously published results.
For this purpose, we begin by recalling the definition and some properties of 
the $\mathrm{Cycl}_4(0)$ condition introduced by Gromov \cite{Gr2}, which is closely related to the $(2+2)$-comparison.
A metric space $X$ is said to satisfy the {\em $\mathrm{Cycl}_4 (0)$ condition} if and only if for any points $x,y,z,w\in X$, there 
exist points $\tilde{x},\tilde{y},\tilde{z},\tilde{w}$ in the Euclidean plane $\mathbb{R}^2$ with 
\begin{align*}
&\|\tilde{x}-\tilde{y}\|\leq d_{X}(x,y),\quad
\|\tilde{y}-\tilde{z}\|\leq d_{X}(y,z) , \quad
\|\tilde{z}-\tilde{w}\|\leq d_{X}(z,w) ,\\
&\|\tilde{w}-\tilde{x}\|\leq d_{X}(w,x) ,\quad
\|\tilde{x}-\tilde{z}\|\geq d_{X}(x,z) ,\quad
\|\tilde{y}-\tilde{w}\|\geq d_{X}(y,w).
\end{align*}
We may assume that the points $x,y,z,w\in X$ in the above definition of $\mathrm{Cycl}_4 (0)$ condition are distinct 
because if they are not distinct, $\{ x,y,z,w\}$ consists of at most three points in a metric space and clearly can embed isometrically into the Euclidean plane. 
Gromov \cite{Gr2} established the following fact. 
\begin{theorem}[Gromov \cite{Gr2}]\label{box-cycl4-th}
A metric space $X$ satisfies the $\boxtimes$-inequalities if and only if $X$ satisfies the $\mathrm{Cycl}_4 (0)$ condition. 
\end{theorem}
For the detailed proof of Theorem \ref{box-cycl4-th}, see \cite[Lemma 2.6]{KTU} and \cite[\S 6]{toyoda-cycle}. 
The following fact is standard, and a proof can be found, for example, in \cite[Lemma~1.13]{toyoda-cycle}. 
\begin{proposition}[cf. Lemma~1.13 of \cite{toyoda-cycle}]\label{cycl4-2plus2-prop}
If a metric space $X$ satisfies the $\mathrm{Cycl}_4 (0)$ condition, then $X$ satisfies the $(2+2)$-comparison. 
\end{proposition}
We are now ready to prove the announced fact. 
For any $x\in\mathbb{R}^2$ and $y,z\in\mathbb{R}^2 \setminus\{ x\}$, we denote by $\measuredangle yxz\in\lbrack 0,\pi\rbrack$ the interior angle measure at $x$ of 
the (possibly degenerate) triangle with vertices $x$, $y$ and $z$. 
\begin{proposition}\label{2plus2-boxtimes-prop}
A metric space $X$ satisfies the $(2+2)$-comparison if and only if $X$ satisfies the $\boxtimes$-inequalities. 
\end{proposition}
\begin{proof}
The ``if'' part follows from Theorem~\ref{box-cycl4-th} and Proposition~\ref{cycl4-2plus2-prop}.
To prove the ``only if'' part, we assume that $X$ satisfies the $(2+2)$-comparison. 
By Theorem \ref{box-cycl4-th}, it suffices to prove that $X$ satisfies the $\mathrm{Cycl}_4 (0)$ condition. 
Choose four distinct points $x,y,z,w\in X$ arbitrarily. 
Clearly, there exist $\tilde{x}, \tilde{y}, \tilde{z}, \tilde{w}\in\mathbb{R}^2$ such that 
\begin{align*}
&d_{X}(x,y)=\|\tilde{x}-\tilde{y}\| ,\quad
d_{X}(y,z)=\|\tilde{y}-\tilde{z}\| , \quad
d_{X}(z,w)=\|\tilde{z}-\tilde{w}\| ,\\
&d_{X}(w,x)=\|\tilde{w}-\tilde{x}\| ,\quad
d_{X}(x,z)=\|\tilde{x}-\tilde{z}\| ,
\end{align*}
and $\tilde{y}$ and $\tilde{w}$ do not lie on the same side of the line through $\tilde{x}$ and $\tilde{z}$. 
If the line segments $\lbrack\tilde{x},\tilde{z}\rbrack$ and $\lbrack\tilde{y},\tilde{w}\rbrack$ 
share a point $\tilde{p}$, then we have 
\begin{equation*}
d_{X}(y,w)\leq\|\tilde{y}-\tilde{p}\| +\|\tilde{p}-\tilde{w}\| =\|\tilde{y}-\tilde{w}\|
\end{equation*}
since $X$ satisfies the $(2+2)$-comparison. 
If $\lbrack\tilde{x},\tilde{z}\rbrack\cap\lbrack\tilde{y},\tilde{w}\rbrack =\emptyset$, then we have 
$\measuredangle\tilde{y}\tilde{x}\tilde{z}+\measuredangle\tilde{w}\tilde{x}\tilde{z}\geq\pi$ or 
$\measuredangle\tilde{y}\tilde{z}\tilde{x}+\measuredangle\tilde{w}\tilde{z}\tilde{x}\geq\pi$. 
We may assume $\measuredangle\tilde{y}\tilde{z}\tilde{x}+\measuredangle\tilde{w}\tilde{z}\tilde{x}\geq\pi$ without loss of generality. 
Then there exist $x' ,y', w' \in\mathbb{R}^2$ with 
\begin{align*}
&\|x'-y'\| =\|\tilde{x}-\tilde{y}\| =d_{X}(x,y),\quad
\|x'-w'\| =\|\tilde{x}-\tilde{w}\| =d_{X}(x,w),\\
&\|y'-w'\| =\|\tilde{y}-\tilde{z}\| +\|\tilde{w}-\tilde{z}\| =d_{X}(y,z)+d_{X}(w,z)\geq d_{X}(y,w)
\end{align*}
since we have $\|\tilde{y}-\tilde{z}\| +\|\tilde{w}-\tilde{z}\| \leq\|\tilde{y}-\tilde{x}\| +\|\tilde{w}-\tilde{x}\|$ in this case. 
Let $z'$ be the point on $\lbrack y',w'\rbrack$ with $\| y'-z' \| =d_{X}(y,z)$. 
Then Alexandrov's Lemma \cite[p.~25, 2.16]{BH} (see also \cite[Lemma~4.6]{toyoda-cycle}) implies that 
\begin{equation*}
\| x' -z' \|\geq\|\tilde{x}-\tilde{z}\| =d_{X}(x,z).  
\end{equation*}
This proves that $X$ satisfies the $\mathrm{Cycl}_4 (0)$ condition, and completes the proof.  
\end{proof}
\begin{remark}
Alexandrov's Lemma \cite[p.~25, 2.16]{BH} is stated for the case when $\tilde{y}$ and $\tilde{w}$ lie on opposite sides of the line through $\tilde{x}$ and $\tilde{z}$.  
However, it is straightforward to verify that the same conclusion also holds when $\tilde{y}$ or $\tilde{w}$ lies on the line through $\tilde{x}$ and $\tilde{z}$.  
We also note that \cite[Lemma~4.6]{toyoda-cycle} provides a generalization of Alexandrov's Lemma, formulated in a way that can be directly applied in the above proof.  
\end{remark}

\section{Proof of Theorem~\ref{main-th}}\label{proof-sec}

In this section, we prove the following theorem, which implies Theorem~\ref{main-th}. 
For any subset $M$ of $\mathbb{R}^3$ and any $R\in (0,\infty )$, we denote by $B(M,R)$ the open $R$-neighborhood of $M$ in $\mathbb{R}^3$. 
For any $x\in\mathbb{R}^3$, we denote $B(\{x\},R)$ simply by $B(x,R)$. 

\begin{theorem}\label{main-ANN-ineq-th}
Fix six distinct points $x_1 ,\ldots ,x_6 \in\mathbb{R}^3$ that satisfy \eqref{lebedeva-condition1} and \eqref{lebedeva-condition2}. 
Set $L=\{ x_1 ,\ldots ,x_6 \}$. 
Then there exists $C\in (0,\infty )$ such that 
for any $\varepsilon\in (0,C\rbrack$, 
$(L,d_{\varepsilon})$ defined by \eqref{L-distance-def} satisfies the $\mathrm{ANN}(n)$ inequalities for 
all positive integers $n$. 
\end{theorem}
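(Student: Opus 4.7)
The plan is to reduce via Proposition~\ref{n-points-prop} to $\mathrm{ANN}(6)$, exploit the embedding $L\subset\mathbb{R}^3$ to extract from Proposition~\ref{pi-prop} a Euclidean ``slack identity,'' and show this slack absorbs the single-edge perturbation of the distance.

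First, Proposition~\ref{n-points-prop} reduces the theorem to verifying the inequality \eqref{pi-ineq} for the six points $x_1,\ldots,x_6\in L$ equipped with $d_\varepsilon$, for every admissible tuple $(p,q,\pi)$ in the compact parameter set $\mathcal{P}$. Regarding $L\subset\mathbb{R}^3$ with Euclidean distance $d_0$, and noting that the parallelogram law and inequality~\eqref{CAT(0)-ineq} are equalities in Euclidean space, the proof of Proposition~\ref{pi-prop} yields the identity
\[
F_0(p,q,\pi)\!:=\!\sum_{i,j}\!p_iq_j\|x_i-x_j\|^2-\tfrac12\!\!\sum_{\pi_{ij}+\pi_{ji}>0}\!\!\tfrac{\pi_{ij}\pi_{ji}}{\pi_{ij}+\pi_{ji}}\|x_i-x_j\|^2=\!\!\sum_{\pi_{ij}+\pi_{ji}>0}\!\!\pi_{ij}\|z-z_{ij}\|^2\!\ge\!0,
\]
where $z=\sum_ip_ix_i$ and the $z_{ij}$ are the Euclidean barycenters from Proposition~\ref{pi-prop}. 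Since $d_\varepsilon$ differs from $d_0$ only at the pair $\{x_5,x_6\}$, with $\Delta_\varepsilon:=d_\varepsilon(x_5,x_6)^2-\|x_5-x_6\|^2=2\varepsilon\|x_5-x_6\|+\varepsilon^2\ge 0$, the corresponding $F_\varepsilon$ equals $F_0+C_{56}\Delta_\varepsilon$ with $C_{56}:=p_5q_6+p_6q_5-\tfrac{\pi_{56}\pi_{65}}{\pi_{56}+\pi_{65}}$ (taking the fraction to be $0$ when $\pi_{56}+\pi_{65}=0$). The inequality $F_\varepsilon\ge 0$ is automatic wherever $C_{56}\ge 0$; the only task is to produce a constant $K>0$, depending only on the configuration, such that $F_0\ge K\cdot\max\{-C_{56},\,0\}$ on all of $\mathcal{P}$. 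Any $C>0$ with $2C\|x_5-x_6\|+C^2\le K$ then proves the theorem.

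The heart of the argument is the qualitative fact: \emph{$F_0(p,q,\pi)=0$ together with $\pi_{56}+\pi_{65}>0$ forces $C_{56}(p,q,\pi)\ge 0$}. Indeed, $F_0=0$ forces $z=z_{ij}$ whenever $\pi_{ij}+\pi_{ji}>0$ and $z=x_k$ whenever $\pi_{kk}>0$; in particular $z\in[x_5,x_6]$. Condition~\eqref{lebedeva-condition2} places the intersection of $(x_5,x_6)$ with $\mathrm{conv}(x_1,\ldots,x_4)$ at a single interior point $y$, and together with the row-sum constraints $\sum_j\pi_{ij}=p_i+q_i$ this rules out $\pi_{kk}>0$ for $k\le 4$, reduces positive $\pi_{55}$ or $\pi_{66}$ to the trivial cases $\pi_{65}=0$ or $\pi_{56}=0$ (which give $C_{56}\ge 0$ directly), and in the remaining diagonal-free subcase forces $p,q$ to be supported on $\{x_5,x_6\}$. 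In that last subcase, the constraints $\pi_{56}\le p_5+q_5$, $\pi_{65}\le p_6+q_6$, and $(p_5+q_5)+(p_6+q_6)=2$ give $\tfrac{\pi_{56}\pi_{65}}{\pi_{56}+\pi_{65}}\le\tfrac{(p_5+q_5)(p_6+q_6)}{2}$, and the algebraic identity $p_5q_6+p_6q_5-\tfrac{(p_5+q_5)(p_6+q_6)}{2}=\tfrac{(p_5-q_5)^2}{2}$ yields $C_{56}\ge 0$. The quantitative bound $F_0\ge K\max\{-C_{56},0\}$ then follows by a compactness and continuity argument on $\mathcal{P}$, combined with a local first-order analysis at the semi-algebraic locus where $F_0$ and $C_{56}$ both vanish.

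The main obstacle is precisely this last quantitative step. The naive estimate $F_0\ge(\pi_{56}+\pi_{65})\|z-z_{56}\|^2$ with $|C_{56}|\le(\pi_{56}+\pi_{65})/4$ only gives the ratio $4\|z-z_{56}\|^2$, which degenerates exactly when $z\in[x_5,x_6]$. In that degenerate regime one must use the auxiliary slack contributions $\sum_{\{i,j\}\ne\{5,6\}}(\pi_{ij}+\pi_{ji})\|z-z_{ij}\|^2$ and $\sum_k\pi_{kk}\|z-x_k\|^2$ to control $|C_{56}|$ to first order, and verifying that $F_0$ and $|C_{56}|$ vanish at comparable rates along every admissible perturbation direction is the delicate part of the proof. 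It is here that the nonregular octahedral geometry enforced by \eqref{lebedeva-condition1} and \eqref{lebedeva-condition2} enters decisively.
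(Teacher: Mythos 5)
Your setup is sound and matches the paper's: the reduction to $n=6$ via Proposition~\ref{n-points-prop}, the exact identity $F_\varepsilon=F_0+C_{56}f(\varepsilon)$ with $f(\varepsilon)=(\|x_5-x_6\|+\varepsilon)^2-\|x_5-x_6\|^2$, and the Euclidean slack identity $F_0=\sum_{\pi_{ij}+\pi_{ji}>0}\pi_{ij}\|z-z_{ij}\|^2$ obtained by running the proof of Proposition~\ref{pi-prop} in $\mathbb{R}^3$ are all correct, as is your qualitative observation that $F_0=0$ together with $\pi_{56},\pi_{65}>0$ forces $z\in(x_5,x_6)$, kills all other entries of $\pi$ by the geometry of \eqref{lebedeva-condition1}--\eqref{lebedeva-condition2}, and hence forces $C_{56}=\tfrac{(p_5-q_5)^2}{2}\geq 0$.

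The gap is the quantitative step, which is the entire substance of the theorem. The qualitative implication ``$F_0=0\Rightarrow C_{56}\geq 0$'' on the compact parameter set $\mathcal{P}$ does \emph{not} yield a uniform $K>0$ with $F_0\geq K\max\{-C_{56},0\}$: the infimum of $F_0/\max\{-C_{56},0\}$ over $\{C_{56}<0\}$ can vanish even when the implication holds, if $F_0$ vanishes to higher order than $-C_{56}$ along some path into the common zero locus (compare $F_0=x^2$, $-C_{56}=x$ on $[0,1]$). A generic compactness/semialgebraicity argument gives at best a \L{}ojasiewicz-type bound $F_0\geq K(\max\{-C_{56},0\})^\alpha$ with some $\alpha\geq 1$, which is useless here unless $\alpha=1$. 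You explicitly flag the verification of comparable vanishing rates as ``the delicate part'' and then do not carry it out; but that verification \emph{is} the proof. The paper's Cases~1--5 supply exactly this content: by introducing the geometric constants $h$, $H$, $\theta$, $\delta$ and the quantities $S=\sum_{i\le 4,j\le 6}\pi_{ij}$, $t_5=\pi_{56}/(\pi_{56}+\pi_{65})$ and $K_0$, it shows that in every regime both $-C_{56}$ and $F_0$ are controlled linearly by the \emph{same} small quantity (either $(\pi_{56}+\pi_{65})$, or $(\pi_{56}+\pi_{65})t_5^2$, or $S t_5$), e.g.\ $-C_{56}\leq (p_5+q_5)S\leq \frac{2(1+\gamma)\|x_5-x_6\|}{h}St_5$ against $F_0\geq cSt_5$ in the hardest case. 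Without an argument of this kind --- matching the first-order rate of $-C_{56}$ against the slack $F_0$ direction by direction --- the proposal does not establish the theorem.
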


\begin{proof}
We first define four positive real constants $h$, $H$, $\theta$ and $\delta$ that depend only on the choice of $x_1 ,\ldots ,x_6 \in\mathbb{R}^3$. 
We set 
\begin{align*}
h&=\min\left\{\min_{y\in\mathrm{conv}\left( L\setminus\left\{ x_5 \right\}\right)}\| x_5 -y\| ,\quad
\min_{y\in\mathrm{conv}\left( L\setminus\left\{ x_6 \right\}\right)}\| x_6 -y\| \right\} ,\\
H&=\max\left\{\max_{y\in\mathrm{conv}\left( L\setminus\left\{ x_5 \right\}\right)}\| x_5 -y\| ,\quad
\max_{y\in\mathrm{conv}\left( L\setminus\left\{ x_6 \right\}\right)}\| x_6 -y\| \right\}, \\
\theta &=\min\left\{\min_{k\in\lbrack 4\rbrack}\measuredangle x_k x_5 x_6 ,\quad
\min_{k\in\lbrack 4\rbrack}\measuredangle x_k x_6 x_5 \right\} ,
\end{align*}
where we denote by $\measuredangle x_k x_i x_j \in\lbrack 0,\pi\rbrack$ 
the interior angle measure at $x_i$ of the triangle with vertices $x_k$, $x_i$ and $x_j$. 
It follows from \eqref{lebedeva-condition1} and \eqref{lebedeva-condition2} that 
$h$, $H$ and $\theta$ are finite positive real numbers, and  
\begin{equation}\label{x5x6geq2h-ineq}
\| x_5 -x_6 \| =\| x_5 -y_0 \| +\| y_0 -x_6\| \geq 2h,
\end{equation}
where $y_0$ is the unique point in 
$(x_5 ,x_6)\cap\left(\mathrm{conv}(\{ x_1 ,x_2 ,x_3 ,x_4\})\setminus\cup_{\{ i,j\}\in\binom{\lbrack 4\rbrack}{2}}\lbrack x_i ,x_j\rbrack\right)$. 
It also follows from \eqref{lebedeva-condition1} and \eqref{lebedeva-condition2} that 
we can choose $\delta\in (0,h/2 )$ 
such that 
\begin{align}
B(\lbrack x_5 ,x_6 \rbrack ,\delta)\cap B(\lbrack x_i ,x_j \rbrack ,\delta) 
&=\emptyset ,\label{delta-def-ij}\\
B(\lbrack x_5 ,x_6 \rbrack ,\delta)\cap B(\lbrack x_5 ,x_i \rbrack ,\delta)
&\subseteq
B\left(x_5 ,\frac{h}{2}\right) ,\label{delta-def-5i}\\
B(\lbrack x_5 ,x_6 \rbrack ,\delta)\cap B(\lbrack x_6 ,x_i \rbrack ,\delta)
&\subseteq
B\left(x_6 ,\frac{h}{2}\right) \label{delta-def-6i}
\end{align}
for any $i,j\in\lbrack 4\rbrack$. 

Fix nonnegative real numbers $p_{1},\ldots ,p_{6},q_{1},\ldots ,q_{6}$ and 
a $6\times 6$ matrix 
$(\pi_{ij})$ with nonnegative real entries that satisfy 
\begin{equation}\label{pqpi6-condition}
\sum_{k=1}^{6}p_k =\sum_{k=1}^{6}q_k =1,\quad
\sum_{k=1}^{6}\pi_{ik}=p_i +q_i
\end{equation}
for every $i\in\lbrack 6\rbrack$. 
By Proposition~\ref{n-points-prop}, to prove the theorem, it suffices to show that 
there exists a constant $C>0$, depending only on the choice of $x_1 ,\ldots ,x_6 \in\mathbb{R}^3$, 
such that the inequality 
\begin{equation}\label{main-ANN-ineq-th-desired-ineq}
\frac{1}{2}\sum_{i,j\in\lbrack 6\rbrack : \pi_{ij}+\pi_{ji}> 0}
\frac{\pi_{ij}\pi_{ji}}{\pi_{ij}+\pi_{ji}}d_{\varepsilon}(x_i ,x_j)^2
\leq
\sum_{i=1}^{6}\sum_{j=1}^{6}p_i q_j d_{\varepsilon}(x_i ,x_j)^2
\end{equation}
holds for any $\varepsilon\in (0,C\rbrack$. 
Since \eqref{main-ANN-ineq-th-desired-ineq} holds true for every $\varepsilon\in\lbrack 0,\infty )$ 
whenever $\pi_{56}=0$ or $\pi_{65}=0$ by definition of $d_{\varepsilon}$, 
we assume that $\pi_{56}>0$ and $\pi_{65}>0$.

For each $\varepsilon\geq 0$, we define $F(\varepsilon )\in\mathbb{R}$ by 
\begin{equation*}
F(\varepsilon )
=
\sum_{i=1}^{6}\sum_{j=1}^{6}p_i q_j d_{\varepsilon}(x_i ,x_j)^2
-
\frac{1}{2}\sum_{i,j\in\lbrack 6\rbrack : \pi_{ij}+\pi_{ji}>0}
\frac{\pi_{ij}\pi_{ji}}{\pi_{ij}+\pi_{ji}}d_{\varepsilon}(x_i ,x_j)^2 .
\end{equation*}
Then it follows from the definition of $d_{\varepsilon}$ that 
\begin{equation}\label{main-ANN-ineq-th-F-ineq1}
F(\varepsilon )
\geq
\left( p_5 q_6 +p_6 q_5 -\frac{\pi_{56}\pi_{65}}{\pi_{56}+\pi_{65}}\right) f(\varepsilon ) +F(0),
\end{equation}
where $f:\lbrack 0,\infty )\to\lbrack 0,\infty )$ is the strictly increasing function defined by 
\begin{equation*}
f(\varepsilon )=
\left(\| x_{5} -x_{6} \| +\varepsilon\right)^2 -\| x_{5}-x_{6}\|^2 ,
\quad \varepsilon\in\lbrack 0,\infty ). 
\end{equation*}
We define $z_p \in\mathbb{R}^3$, $z_q \in\mathbb{R}^3$ 
and $z_{ij}\in\mathbb{R}^3$ for any $i,j\in\lbrack 6\rbrack$ with $\pi_{ij}+\pi_{ji}> 0$ by 
\begin{equation*}
z_p=\sum_{k=1}^6 p_k x_k ,\quad
z_q=\sum_{k=1}^6 q_k x_k ,\quad
z_{ij}=\frac{\pi_{ij}x_i +\pi_{ji}x_j}{\pi_{ij}+\pi_{ji}}.
\end{equation*}
In other words, $z_p$, $z_q$ and $z_{ij}$ are the barycenters of the probability measures $\sum_{k=1}^{6} p_k \delta_{x_k}$, 
$\sum_{k=1}^{6} q_k \delta_{x_k}$ and $(\pi_{ij}\delta_{x_i}+\pi_{ji}\delta_{x_j})/(\pi_{ij}+\pi_{ji})$, respectively. 
By Proposition~\ref{pi-prop}, we have 
\begin{align*}
F(0)
\geq
\sum_{i,j\in\lbrack 6\rbrack : \pi_{ij}+\pi_{ji}> 0}\pi_{ij}\| z_p -z_{ij}\|^2 ,\qquad
F(0)
\geq
\sum_{i,j\in\lbrack 6\rbrack : \pi_{ij}+\pi_{ji}> 0}\pi_{ij}\| z_q -z_{ij}\|^2 .
\end{align*}
Combining these inequalities with \eqref{main-ANN-ineq-th-F-ineq1}, we obtain 
\begin{align}
F(\varepsilon)
\geq
&\left( p_5 q_6 +p_6 q_5 -\frac{\pi_{56}\pi_{65}}{\pi_{56}+\pi_{65}}\right) f(\varepsilon )\nonumber\\
&\quad
+
\max\left\{\sum_{i,j\in\lbrack 6\rbrack : \pi_{ij}+\pi_{ji}> 0}\pi_{ij}\| z_{p}-z_{ij}\|^2 ,
\sum_{i,j\in\lbrack 6\rbrack : \pi_{ij}+\pi_{ji}> 0}\pi_{ij}\| z_{q}-z_{ij}\|^2 \right\} .
\label{main-ANN-ineq-th-F-ineq}
\end{align}
Set $S=\sum_{i=1}^{4}\sum_{j=1}^{6}\pi_{ij}$. 
Then we have 
\begin{equation*}
p_6 \geq 1-p_{5}-S,\quad
q_6 \geq 1-q_{5}-S,\quad
\pi_{56}\leq p_{5}+q_{5},\quad
\pi_{65}\leq 2-p_{5}-q_{5}, 
\end{equation*}
and therefore 
\begin{align}
p_{5}q_{6}+p_{6}q_{5}-\frac{\pi_{56}\pi_{65}}{\pi_{56}+\pi_{65}}
&\geq
p_{5}(1-q_{5}-S)+(1-p_{5}-S)q_{5}-\frac{(p_{5}+q_{5})(2-p_{5}-q_{5})}{2} \nonumber\\
&=
\frac{(p_{5}-q_{5})^2}{2}-(p_{5}+q_{5})S
\geq
-(p_{5}+q_{5})S
\label{S-ineq}
\end{align}
since the function $\varphi (x,y)=xy/(x+y)$, $x,y\in(0,\infty )$ is increasing with respect to both $x$ and $y$. 
Similarly, we also have 
\begin{equation*}
p_{5}q_{6}+p_{6}q_{5}-\frac{\pi_{56}\pi_{65}}{\pi_{56}+\pi_{65}}
\geq
-(p_{6}+q_{6})S. 
\end{equation*}

We consider three cases. 

\textsc{Case 1}: 
{\em The inequality $\| z_p -z_{56}\|\geq\delta$ holds, or the inequality $\| z_q -z_{56}\|\geq\delta$ holds.} 
In this case, it follows from \eqref{main-ANN-ineq-th-F-ineq} that 
\begin{equation*}
F(\varepsilon )
\geq
-\frac{\pi_{56}\pi_{65}}{\pi_{56}+\pi_{65}}f(\varepsilon ) +(\pi_{56}+\pi_{65})\delta^2
\geq
(\pi_{56}+\pi_{65})(\delta^2 -f(\varepsilon )).
\end{equation*}
Therefore, we have $F(\varepsilon )\geq 0$ for any $\varepsilon\in (0,f^{-1}(\delta^2 )\rbrack$. 

\textsc{Case 2}: 
{\em The inequality $\| z_p -z_{ij}\|\geq\delta$ holds for all $i,j\in\lbrack 6\rbrack$ 
with $\{ i,j\}\not\subseteq\{ 5,6\}$ and $\pi_{ij}+\pi_{ji}>0$, or the inequality $\| z_q -z_{ij}\|\geq\delta$ holds for all $i,j\in\lbrack 6\rbrack$ 
with $\{ i,j\}\not\subseteq\{ 5,6\}$ and $\pi_{ij}+\pi_{ji}>0$.} 
In this case, it follows from \eqref{main-ANN-ineq-th-F-ineq} and \eqref{S-ineq} that 
\begin{align*}
F(\varepsilon )
\geq
-(p_{5}+q_{5})S f(\varepsilon ) +S \delta^2
\geq
S \left( \delta^{2} -2f(\varepsilon )\right) .
\end{align*}
Therefore, we have $F(\varepsilon )\geq 0$ for any $\varepsilon\in (0,f^{-1}(\delta^2 /2)\rbrack$. 

\textsc{Case 3}: {\em Neither \textsc{Case~1} nor \textsc{Case~2} holds.} 
In this case, 
there exist $i_{0},j_{0},i_{1}, j_{1}\in\lbrack 6\rbrack$ such that 
\begin{align*}
&\{ i_{0},j_{0}\}\not\subseteq\{ 5,6\} ,\quad
\{ i_{1},j_{1}\}\not\subseteq\{ 5,6\} ,\\
&z_{p}\in B\left(\lbrack x_5 ,x_6 \rbrack , \delta\right)
\cap B\left(\lbrack x_{i_{0}} ,x_{j_{0}} \rbrack , \delta\right) ,\quad
z_{q}\in B\left(\lbrack x_5 ,x_6 \rbrack , \delta\right)
\cap B\left(\lbrack x_{i_{1}} ,x_{j_{1}} \rbrack , \delta\right) .
\end{align*}
It follows from \eqref{delta-def-ij} that 
\begin{equation*}
\{ i_{0},j_{0}\}\cap\{ 5,6\}\neq\emptyset ,\quad
\{ i_{1},j_{1}\}\cap\{ 5,6\}\neq\emptyset .
\end{equation*}
Therefore, it follows from \eqref{delta-def-5i} and \eqref{delta-def-6i} that 
\begin{equation*}
\{z_p ,z_q\}\subseteq B\left(x_5 ,\frac{h}{2}\right)\cup B\left(x_6 ,\frac{h}{2}\right) .
\end{equation*}
If we had 
\begin{equation*}
z_{p}\in B\left(x_5 ,\frac{h}{2}\right) ,\quad
z_{q}\in B\left(x_6 ,\frac{h}{2}\right) ,
\end{equation*}
then we would have 
\begin{align*}
\| x_{5}-x_{6}\|
&\leq
\| x_{5}-z_{p}\| +\| z_{p}-z_{56}\| +\| z_{56}-z_{q}\| +\| z_{q}-x_{6}\| \\
&\leq
\frac{h}{2}+\delta +\delta +\frac{h}{2}
<
2h ,
\end{align*}
contradicting \eqref{x5x6geq2h-ineq}.
Similarly, it is also impossible to have 
\begin{equation*}
z_{p}\in B\left(x_6 ,\frac{h}{2}\right) ,\quad
z_{q}\in B\left(x_5 ,\frac{h}{2}\right) .
\end{equation*}
Therefore, we have 
\begin{equation}\label{z-near-x5}
\{z_p ,z_q\}\subseteq B\left(x_5 ,\frac{h}{2}\right)
\end{equation}
or 
\begin{equation}\label{z-near-x6}
\{z_p ,z_q\}\subseteq B\left(x_6 ,\frac{h}{2}\right) .
\end{equation}
Because it is easily seen that we can obtain the same estimate of $F(\varepsilon )$ in the same way as we describe below 
whether \eqref{z-near-x5} holds or \eqref{z-near-x6} holds, 
we assume that \eqref{z-near-x6} holds. 

Set 
\begin{equation*}
t_i =\frac{\pi_{i6}}{\pi_{i6}+\pi_{6i}}
\end{equation*}
for each $i\in\lbrack 5\rbrack$ with $\pi_{i6}+\pi_{6i}>0$. 
Note that since we have assumed $\pi_{56}>0$, it follows that $t_{5}>0$. 
Let $K_0$ be the nonnegative real number that satisfies 
\begin{equation*}
\min\left\{ p_6 ,q_6 \right\}
=
1-K_0 t_5 .
\end{equation*}
Then we have 
\begin{multline}
p_5 q_6 +p_6 q_5
-\frac{\pi_{56}\pi_{65}}{\pi_{56}+\pi_{65}}
\geq
(p_5 +q_5 )\min\left\{ p_6 ,q_6 \right\} -t_{5}(1-t_{5})(\pi_{56}+\pi_{65}) \\
\geq
\pi_{56}(1-K_0 t_{5}) -t_{5}(1-t_{5})(\pi_{56}+\pi_{65})
=
(\pi_{56}+\pi_{65})(1-K_{0})t_5^2 .
\label{K0-estimate}
\end{multline}
Since \eqref{main-ANN-ineq-th-F-ineq} and \eqref{K0-estimate} 
imply that the inequality $F(\varepsilon )\geq 0$ holds for any $\varepsilon >0$ whenever $K_0 \leq 1$, 
we assume that $K_0 >1$. 
We define $z\in\mathbb{R}^3$ and $\tilde{z}\in\mathbb{R}^3$ by 
\begin{equation*}
z
=
\begin{cases}
z_p ,\quad\textrm{if }p_6 \leq q_6 ,\\
z_q ,\quad\textrm{if }q_6 < p_6 ,
\end{cases}
\quad
\tilde{z}
=
\begin{cases}
\frac{1}{1-p_{6}}\sum_{k=1}^5 p_k x_k ,\quad\textrm{if }p_6 \leq q_6 ,\\
\frac{1}{1-q_{6}}\sum_{k=1}^5 q_k x_k ,\quad\textrm{if }q_6 < p_6 .
\end{cases}
\end{equation*}
To complete the proof, 
we fix a constant $\gamma\in (0,\infty )$ arbitrarily, and consider three subcases. 

\textsc{Subcase 3a}: 
{\em The inequality $K_0 \geq (1+\gamma )\| x_5 -x_6 \| /h$ holds.}
In this case, we have 
\begin{align*}
\| z -z_{56}\|
&\geq
\| z -x_{6}\| -\| z_{56} -x_{6}\| \\
&=
K_{0}t_{5}\left\|\tilde{z}-x_6 \right\| -t_{5}\| x_5 -x_6 \|
\geq
\left( K_0 h-\| x_5 -x_6 \|\right) t_{5}>0.
\end{align*}
Together with \eqref{main-ANN-ineq-th-F-ineq} and \eqref{K0-estimate}, this implies that 
\begin{equation*}
F(\varepsilon )
\geq
-(\pi_{56}+\pi_{65})(K_{0}-1)t_{5}^2 f(\varepsilon )
+(\pi_{56}+\pi_{65})\left( K_0 h-\| x_5 -x_6 \|\right)^2 t_5^2 .
\end{equation*}
It follows that we have $F(\varepsilon )\geq 0$ whenever 
\begin{equation*}
f(\varepsilon )
\leq
\frac{(K_0 h-\| x_5 -x_6 \| )^2}{K_0 -1}.
\end{equation*}
Since $\| x_5 -x_6 \| /h \geq 1$, it is easily seen that the function 
\begin{equation*}
\varphi_0 (K)=\frac{(K h-\| x_5 -x_6 \| )^2}{K -1},\quad K \in\left\lbrack\frac{(1+\gamma )\| x_5 -x_6 \|}{h},\infty\right)
\end{equation*}
is increasing. 
Therefore, we have $F(\varepsilon )\geq 0$ whenever 
\begin{equation*}
\varepsilon\leq f^{-1}\left(\frac{h\gamma^2 \| x_5-x_6 \|^2}{(1+\gamma )\| x_5-x_6 \| -h}\right) .
\end{equation*}

\textsc{Subcase 3b}: 
{\em The inequalities $1<K_0 <(1+\gamma )\| x_5 -x_6 \| /h$ and $\measuredangle zx_6 x_5 \geq\theta /2$ hold.} 
In this case, there exists a point $w\in\lbrack z,z_{56}\rbrack$ with $\measuredangle w x_{6}z_{56}=\theta /2$. 
Let $w'$ be the foot of the perpendicular from $z_{56}$ to the line through $x_{6}$ and $w$. 
Then we have 
\begin{align*}
\| z-z_{56}\|
&=
\| z-w\| +\| w-z_{56}\| \\
&\geq
\| w' -z_{56}\|
=
\left(\sin\frac{\theta}{2}\right)\| z_{56}-x_{6}\|
=
\left(\sin\frac{\theta}{2}\right)\| x_{5}-x_{6}\| t_{5}.
\end{align*}
Together with \eqref{main-ANN-ineq-th-F-ineq} and \eqref{K0-estimate}, this implies that 
\begin{equation*}
F(\varepsilon )
\geq
(\pi_{56}+\pi_{65})
\left( -\left( \frac{(1+\gamma )\| x_5 -x_6 \|}{h}-1\right) f(\varepsilon )
+\left(\sin\frac{\theta}{2}\right)^2 \| x_{5}-x_{6}\|^2 \right)t_5^2 .
\end{equation*}
Therefore, we have $F(\varepsilon )\geq 0$ whenever 
\begin{equation*}
\varepsilon
\leq
f^{-1}\left(\frac{h\left(\sin\frac{\theta}{2}\right)^2 \| x_5 -x_6 \|^2 }{(1+\gamma )\| x_5 -x_6 \| -h}\right) .
\end{equation*}

\textsc{Subcase 3c}: 
{\em The inequalities $1<K_0 <(1+\gamma )\| x_5 -x_6 \| /h$ and $\measuredangle zx_6 x_5 <\theta /2$ hold.} 
In this case, by definition of $\theta$, for every $m\in\lbrack 4\rbrack$ with $\pi_{m6}>0$, we have 
$\measuredangle zx_6 z_{m6}=\measuredangle zx_6 x_m >\theta /2$. 
Hence, for each such $m$, 
there exists a point $w_{m}\in\lbrack z,z_{m6}\rbrack$ satisfying $\measuredangle z x_{6}w_{m} =\theta /2$. 
Let $w'_{m}$ denote the foot of the perpendicular from $z$ to the line through $x_{6}$ and $w_{m}$. 
Then we have 
\begin{align}
\| z-z_{m6}\|
&=
\| z-w_{m}\| +\| w_{m}-z_{m6}\| 
\geq
\| z-w'_{m}\| 
=
\left(\sin\frac{\theta}{2}\right)\| z-x_{6}\| \nonumber\\
&=
\left(\sin\frac{\theta}{2}\right)\|\tilde{z}-x_{6}\|K_0  t_{5}
\geq
\left(\sin\frac{\theta}{2}\right)h t_{5}
\label{z-zk6-ineqs}
\end{align}
for every $m\in\lbrack 4\rbrack$ with $\pi_{m6}>0$. 
We now prove that the inequality 
\begin{equation}\label{key-ineq-in-subcase3c}
(\pi_{m6}+\pi_{6m})\| z-z_{m6}\|^2
\geq
\min\left\{
\frac{h^4 \left(\sin\frac{\theta}{2}\right)^2}{(1+\gamma )^2 \| x_5 -x_6 \| H}, 
\gamma^2 \| x_{5}-x_{6}\| H 
\right\}
\pi_{m6}t_{5}
\end{equation}
holds for all $m\in\lbrack 4\rbrack$ with $\pi_{m6}>0$ by considering two separate cases. 
We first consider the case where $m\in\lbrack 4\rbrack$ with $\pi_{m6}>0$ satisfies 
\begin{equation*}
t_{m}\| x_{m}-x_{6}\|
\leq
\frac{(1+\gamma )^2 \| x_5 -x_6 \| H t_{5}}{h} .
\end{equation*}
In this case, we have 
\begin{equation*}
0<t_{m}
\leq
\frac{(1+\gamma )^2 \| x_5 -x_6 \| H t_{5}}{h^2}. 
\end{equation*}
Together with \eqref{z-zk6-ineqs}, this implies that 
\begin{equation}\label{pil6pi6l-ineqs}
(\pi_{m6}+\pi_{6m})\| z-z_{m6}\|^2
=
\frac{1}{t_m}\pi_{m6}\| z-z_{m6}\|^2
\geq
\frac{h^4 \left(\sin\frac{\theta}{2}\right)^2}{(1+\gamma )^2 \| x_5 -x_6 \| H}\pi_{m6}t_{5}. 
\end{equation}
We next consider the case where $m\in\lbrack 4\rbrack$ with $\pi_{m6}>0$ satisfies 
\begin{equation*}
t_{m}\| x_{m}-x_{6}\|
>
\frac{(1+\gamma )^2 \| x_5 -x_6 \| H t_{5}}{h}.
\end{equation*}
In this case, we have 
\begin{align*}
\| z-z_{m6}\|
&\geq
\| z_{m6}-x_{6}\| -\| z-x_{6}\|
=
t_{m}\| x_{m}-x_{6}\| -K_{0}t_{5}\|\tilde{z}-x_{6}\| \\
&>
t_{m}\| x_{m}-x_{6}\| -\frac{(1+\gamma )\| x_{5}-x_{6}\| H t_{5}}{h}
>
\frac{\gamma (1+\gamma )\| x_{5}-x_{6}\| H t_5}{h}
>0,
\end{align*}
and define $K_{m}\in (\gamma (1+\gamma )\| x_{5}-x_{6}\| H/h,\infty )$ to be the positive real number that satisfies 
\begin{equation*}
K_{m}t_{5}
=
t_{m}\| x_{m}-x_{6}\| -\frac{(1+\gamma )\| x_{5}-x_{6}\| H t_{5}}{h}.
\end{equation*}
Then we have 
\begin{equation*}
\| z-z_{m6}\|
>
K_{m}t_{5},\quad
\pi_{m6}+\pi_{6m}
=
\frac{1}{t_m}\pi_{m6}
\geq
\frac{h^2}{\left(hK_{m}+(1+\gamma )\| x_{5}-x_{6}\| H\right) t_{5}}\pi_{m6} ,
\end{equation*}
and therefore 
\begin{equation*}
(\pi_{m6}+\pi_{6m})\| z-z_{m6}\|^2
>
\frac{h^2 K_{m}^2}{hK_{m}+(1+\gamma )\| x_{5}-x_{6}\| H}\pi_{m6}t_{5}.
\end{equation*}
Because the function 
\begin{equation*}
\varphi_1 (K)
=
\frac{h^2 K^2}{hK+(1+\gamma )\| x_{5}-x_{6}\| H},\quad
K\in\left\lbrack\frac{\gamma (1+\gamma )\| x_5 -x_6\| H}{h},\infty\right)
\end{equation*}
is increasing, we obtain 
\begin{equation*}
(\pi_{m6}+\pi_{6m})\| z-z_{m6}\|^2
>
\gamma^2 \| x_{5}-x_{6}\| H \pi_{m6}t_{5} .
\end{equation*}
Together with \eqref{pil6pi6l-ineqs}, 
this proves that \eqref{key-ineq-in-subcase3c} holds 
for all $m\in\lbrack 4\rbrack$ with $\pi_{m6}>0$. 
For any $i\in\lbrack 4\rbrack$ and any $j\in\lbrack 5\rbrack$ with $\pi_{ij}+\pi_{ji}>0$, we have 
\begin{equation*}
\| z_p - z_{ij}\| >\frac{h}{2},\quad
\| z_q - z_{ij}\| >\frac{h}{2}
\end{equation*}
by \eqref{z-near-x6}, and therefore 
\begin{equation}\label{z-zij-S1-ineq}
\pi_{ij}\| z-z_{ij}\|^2 \geq\frac{h^2}{4}\pi_{ij}\geq\frac{h^2}{4}\pi_{ij}t_{5}.
\end{equation}
Set 
\begin{equation*}
c
=
\min\left\{\frac{h^4 \left(\sin\frac{\theta}{2}\right)^2}{(1+\gamma )^2 \| x_5 -x_6 \| H},\quad\gamma^2 \| x_{5}-x_{6}\| H,\quad\frac{h^2}{4}\right\} .
\end{equation*}
Then it follows from \eqref{key-ineq-in-subcase3c} and \eqref{z-zij-S1-ineq} that 
\begin{align}
&\sum_{i,j\in\lbrack 6\rbrack : \pi_{ij}+\pi_{ji}> 0}\pi_{ij}\| z-z_{ij}\|^2 \nonumber\\
&\qquad\geq
\sum_{i\in\lbrack 4\rbrack,j\in\lbrack 5\rbrack : \pi_{ij}+\pi_{ji}> 0}\pi_{ij}\| z-z_{ij}\|^2
+
\sum_{m\in\lbrack 4\rbrack : \pi_{m6}> 0}(\pi_{m6}+\pi_{6m}) \| z-z_{m6}\|^2 \nonumber\\
&\qquad\geq
\sum_{i\in\lbrack 4\rbrack,j\in\lbrack 5\rbrack : \pi_{ij}+\pi_{ji}> 0}c\pi_{ij}t_{5}
+
\sum_{m\in\lbrack 4\rbrack : \pi_{m6}> 0}c\pi_{m6}t_{5}
=
c S t_{5},\label{S1ct6-ineq}
\end{align}
where $S=\sum_{i=1}^{4}\sum_{j=1}^{6}\pi_{ij}$ as we have defined before. 
On the other hand, we have 
\begin{align}
&p_5 q_6 +q_5 p_6 -\frac{\pi_{56}\pi_{65}}{\pi_{56}+\pi_{65}}
\geq
-(p_{5}+q_{5})S \nonumber\\
&\qquad
\geq
-(2-p_{6}-q_{6})S
\geq
-2K_{0}S t_{5}
\geq
-\frac{2(1+\gamma )\| x_5 -x_6 \|}{h}S t_{5}
\label{pqqppipi-S-ineq}
\end{align}
by \eqref{S-ineq}. 
It follows from \eqref{main-ANN-ineq-th-F-ineq}, \eqref{S1ct6-ineq} and \eqref{pqqppipi-S-ineq} that 
\begin{equation*}
F(\varepsilon )
\geq
-\frac{2(1+\gamma )\| x_5 -x_6 \| }{h}S t_{5}f(\varepsilon ) +c S t_{5}.
\end{equation*}
Therefore we have $F(\varepsilon )\geq 0$ whenever 
\begin{equation*}
\varepsilon
\leq
f^{-1}\left(\frac{c h}{2(1+\gamma )\| x_5 -x_6 \|}\right) .
\end{equation*}

Set 
\begin{align*}
C_{\gamma}=
\min\bigg\{
f^{-1}(\delta^2 /2) ,\hspace{1mm}
&f^{-1}\left(\frac{h\gamma^2 \| x_5-x_6 \|^2}{(1+\gamma )\| x_5-x_6 \| -h}\right) ,\\
&\quad f^{-1}\left(\frac{h\left(\sin\frac{\theta}{2}\right)^2 \| x_5 -x_6 \|^2 }{(1+\gamma )\| x_5 -x_6 \| -h}\right) ,\hspace{1mm}
f^{-1}\left(\frac{c h}{2(1+\gamma )\| x_5 -x_6 \|}\right)
\bigg\} .
\end{align*}
We have proved so far that for any $\gamma\in (0,\infty )$, 
the inequality \eqref{main-ANN-ineq-th-desired-ineq} holds for any $\varepsilon\in (0,C_{\gamma}\rbrack$. 
The constant $C_{\gamma}\in (0,\infty )$ depends only on the choice of $x_1 ,\ldots ,x_6 \in\mathbb{R}^3$ and on $\gamma\in (0,\infty )$. 
Since we can fix $\gamma\in (0,\infty )$ arbitrarily, this completes the proof. 
\end{proof}

Theorem~\ref{main-th} and Corollary~\ref{main-coro} follow from Theorem~\ref{main-ANN-ineq-th} immediately. 

\begin{proof}[Proof of Theorem~\ref{main-th}]
Theorem~\ref{Lebedeva-th} and Theorem~\ref{main-ANN-ineq-th} imply Theorem~\ref{main-th}.
\end{proof}

\begin{proof}[Proof of Corollary~\ref{main-coro}]
Theorem~\ref{main-th} and Corollary~\ref{pi-to-AB-coro} imply Corollary~\ref{main-coro}.
\end{proof}

\bigskip
\begin{acknowledgement}
The author thanks the anonymous referees for a careful reading and valuable comments, which helped to clarify the logical structure of the paper.
This work was supported in part by JSPS KAKENHI Grant Number JP21K03254.
\end{acknowledgement}

\end{document}